\documentclass[final]{siamart190516} 

\usepackage{amsfonts,amsopn,braket,amsmath,amssymb}
\usepackage{mathrsfs}
\usepackage{color}

\title{A model reduction approach for inverse problems with operator valued data
}
\author{J\"urgen D\"olz%
\thanks{Institute for Numerical Simulation, University of Bonn, Friedrich-Hirzebruch-Allee 7, 53115 Bonn, Germany.
\email{doelz@ins.uni-bonn.de}}
\and Herbert Egger%
\thanks{Numerical Analysis and Scientific Computing, Department of Mathematics, TU Darmstadt, Dolivostr. 15, 64293 Darmstadt, Germany. 
\email{egger@mathematik.tu-darmstadt.de}}
\and Matthias Schlottbom%
\thanks{Department of Applied Mathematics, University of Twente,
P.O. Box 217, 7500 AE Enschede, The Netherlands.
\email{m.schlottbom@utwente.nl}}
}

\headers{Inverse problems with operator valued data}{J. D\"olz, H. Egger, and M. Schlottbom}

\def\NN{\mathbb{N}}
\def\RR{\mathbb{R}}
\def\eps{\epsilon}
\def\dim{\operatorname{dim}}
\def\rank{\operatorname{rank}}

\def\mycirc{\,}

\def\HS{\mathbb{HS}}

\def\XX{\mathbb{X}}
\def\YY{\mathbb{Y}}
\def\ZZ{\mathbb{Z}}
\def\VV{\mathbb{V}}
\def\UU{\mathbb{U}}
\def\AA{\mathbb{A}}
\def\BB{\mathbb{B}}
\def\CC{\mathbb{C}}

\def\T{\mathcal{T}}
\def\U{\mathcal{U}}
\def\V{\mathcal{V}}
\def\D{\mathcal{D}}
\def\M{\mathcal{M}}

\def\L{\mathcal{L}}
\def\P{\mathcal{P}}
\def\Q{\mathcal{Q}}
\def\R{\mathcal{R}}
\def\S{\mathcal{S}}

\def\Id{\mathcal{I}}
\def\Th{\Omega_h}

\def\tta{\mathtt{a}}
\def\ttc{\mathtt{c}}
\def\ttd{\mathtt{d}}
\def\ttx{\mathtt{x}}
\def\ttm{\mathtt{m}}

\def\fro{\mathbb{F}}
\def\ttA{\mathtt{A}}
\def\ttB{\mathtt{B}}
\def\ttD{\mathtt{D}}
\def\ttE{\mathtt{E}}
\def\ttK{\mathtt{K}}
\def\ttI{\mathtt{I}}
\def\ttM{\mathtt{M}}
\def\ttN{\mathtt{N}}

\def\ttQ{\mathtt{Q}}
\def\ttR{\mathtt{R}}
\def\ttS{\mathtt{S}}
\def\ttU{\mathtt{U}}
\def\ttT{\mathtt{T}}
\def\ttTt{\mathtt{Tt}}
\def\ttt{\mathtt{t}}
\def\ttKt{\mathtt{Kt}}
\def\ttV{\mathtt{V}}
\def\ttA{\mathtt{A}}
\def\ttX{\mathtt{X}}
\def\ttY{\mathtt{Y}}
\def\ttZ{\mathtt{Z}}


\newsiamremark{assumption}{Assumption} 
\crefname{assumption}{Assumption}{Assumption} 
\newsiamremark{remark}{Remark} 
\crefname{remark}{Remark}{Remark} 

\begin{document}

\maketitle

\begin{abstract} 
We study the efficient numerical solution of linear inverse problems with operator valued data which arise, e.g., in seismic exploration, inverse scattering, or tomographic imaging. The high-dimensionality of the data space implies extremely high computational cost already for the evaluation of the forward operator which makes a numerical solution of the inverse problem, e.g., by iterative regularization methods, practically infeasible. To overcome this obstacle, we take advantage of the underlying tensor product structure of the problem and propose a strategy for constructing low-dimensional certified reduced order models of quasi-optimal rank for the forward operator which can be computed much more efficiently than the truncated singular value decomposition. A complete analysis of the proposed model reduction approach is given in a functional analytic setting and the efficient numerical construction of the reduced order models as well as of their application for the numerical solution of the inverse problem is discussed. In summary, the setup of a low-rank approximation can be achieved in an offline stage at essentially the same cost as a single evaluation of the forward operator, while the actual solution of the inverse problem in the online phase can be done with extremely high efficiency. The theoretical results are illustrated by application to a typical model problem in fluorescence optical tomography.
\end{abstract}

\begin{keywords}
Inverse problems, model reduction, low-rank approximation, matrix compression, singular value decomposition, hyperbolic cross approximation, optimal experiment design, fluorescence optical tomography
\end{keywords}

\begin{AMS}
46N40, 
65J20, 
65N21  
\end{AMS}


\section{Introduction} \label{sec:1}

We consider the numerical solution of linear inverse problems with operator valued data modeled by abstract operator equations
\begin{align} \label{eq:ip}
\T(c) = \M^\delta. 
\end{align}
Here $c \in \XX$ is the quantity to be determined and we assume that $\M^\delta : \YY \to \ZZ'$, representing the possibly perturbed measurements, is a linear operator of Hilbert-Schmidt class between Hilbert spaces $\YY$ and $\ZZ'$, the dual of $\ZZ$. 
We further assume that the forward operator $\T : \XX \to \HS(\YY,\ZZ')$ is linear and compact, and admits a factorization of the form  
\begin{align} \label{eq:def_T}
\T(c) = \V' \, \D(c) \, \U,
\end{align}
with $\V'$, $\D(c)$, and $\U$ again denoting appropriate linear operators. 
Problems of this kind arise in a variety of applications, e.g. in fluorescence tomography \cite{ArridgeSchotland2009,Ntziachristos2006}, inverse scattering \cite{ColtonKress,Grinberg2008}, or source identification \cite{HohageEtAl2020}, but also as linearizations of related nonlinear inverse problems, see e.g., \cite{EggFreiSch10,Somersalo1992} or \cite{Levinson2016} and the references given there. 
In such applications,  $\U$ typically models the propagation of excitation fields generated by the sources, $\D$ describes the interaction with the medium to be probed, and $\V'$ models the emitted fields which can be recorded by the detectors.
In the following, we briefly outline our basic approach towards the numerical solution of \cref{eq:ip}--\cref{eq:def_T} and report about related work in the literature.

\subsection{Regularized inversion}\label{sec:reginv}

By the particular functional analytic setting, 
the inverse problem \cref{eq:ip}--\cref{eq:def_T} amounts to an ill-posed linear operator equation in Hilbert spaces and standard regularization theory can be applied for its stable solution \cite{Bakushinsky2004,EHN96}. 
Following standard arguments, we assume that $\M^\delta$ is a perturbed version of the exact data $\M$ and that a bound on the measurement noise
\begin{align} \label{eq:noise}
\|\M - \M^\delta\|_{\HS(\YY,\ZZ')} \le \delta
\end{align}
is available. We further denote by $c^\dag$ the minimum norm solution of \cref{eq:ip} with $\M^\delta$ replaced by $\M = {\T(c^\dag)}$. 
A stable approximation for the solution $c^\dag$ can then be obtained by the regularized inversion of \cref{eq:ip}, e.g., by spectral regularization methods
\begin{align} \label{eq:cad} 
c_\alpha^\delta 
&=  g_\alpha(\T^\star \T) \T^\star \M^\delta 
 = \T^\star g_\alpha(\T \T^\star) \M^\delta.
\end{align}
Here $\T^\star : \HS(\YY,\ZZ') \to \XX$ denotes the adjoint of the operator $\T$ 
{and $g_\alpha(\lambda)$ denotes a regularized approximation of $1/\lambda$, i.e., the filter function, satisfying some standard conditions; we refer to \cite[Chapter 2]{Bakushinsky2004} or \cite[Chapter 4]{EHN96} for details and to \cite{Mathe2003} for generalizations.}
A typical example for the filter function is $g_\alpha(\lambda) = (\lambda+\alpha)^{-1}$, which leads to Tikhonov regularization $c_\alpha^\delta = (\T^\star \T + \alpha \Id)^{-1} \T^\star \M^\delta$, see {\cite{TikhonovArsenin}. 
Another filter function with certain optimality conditions results from truncated singular value decomposition and reads $g_\alpha(\lambda)=1/\lambda$ if $\lambda\geq\alpha$ and $g_\alpha(\lambda)=0$ otherwise.} 

{Note that the choice of the regularization norm in \eqref{eq:cad} is incorporated implicitly in the definition of the function spaces and one can obtain convergence $c_\alpha^\delta \to c^\dagger$ of the regularized solutions to the minimum norm solution $c^\dagger$ in this norm if $\delta \to 0$ and $\alpha=\alpha(\delta,\M^\delta)$ is chosen appropriately; the rate of convergence will depend on the properties of the filter function $g_\alpha$, the parameter choice $\alpha(\delta,\M^\delta)$, and on the smoothness of the minimum norm solution $c^\dagger$; see  \cite{Bakushinsky2004,EHN96,Mathe2003} for details.  
For tomographic applications we have in mind, uniqueness results are usually available \cite{Isakov2017,Natterer2001}, i.e., $\T$ can be assumed to be injective, in which case $c^\dagger$ is independent of the regularization norm; see Remark~\ref{rem:unique} below.
We will not step further into the analysis of regularization methods, but rather focus on their efficient numerical realization for problems with operator valued data.}

For the actual computation of the regularized solution $c_\alpha^\delta$, a sufficiently accurate finite dimensional approximation of the operator $\T$ is required, which is usually obtained by some discretization procedure; in the language of model order reduction, this is called the \emph{truth} or \emph{high-fidelity approximation} \cite{Benner2015,Quarteroni2016}. 
In the following discussion, we will not distinguish between infinite dimensional operators and their truth approximations. 
We thus assume that $\dim(\XX) = m$, $\dim(\YY) = k_\YY$ and $\dim(\ZZ)=k_\ZZ$. For ease of notation, we assume that $k_\XX=k_\YY=k$ in the following.
We may then identify $c$ with a vector in $\RR^m$, $\M^\delta$ with a matrix in $\RR^{k \times k}$, and $\T$ with a third order tensor in $\RR^{k \times k \times m}$ or a matrix in $\RR^{k^2 \times m}$. 
In typical applications, like computerized tomography, the dimensions $m$ and $k$ are very large and one may assume that $k < m < k^2$; see \cite{Chaillat2012,Markel2019}. The inverse problem \eqref{eq:ip} thus can be considered to be typically of \emph{large scale} and \emph{overdetermined}. 

\subsection{Model reduction and computational complexity}\label{sec:mor_general}

The high dimensionality of the problem poses severe challenges for the numerical solution of the inverse problem \cref{eq:ip}--\cref{eq:def_T} and different model reduction approaches have been proposed to reduce the computational complexity. 
We consider approximations 
\begin{align} \label{eq:TN}
\T_N = \Q_N \T, 
\end{align}
based on projection in data space, where $\Q_N$ is an orthogonal projection with finite rank $N$, which is the dimension of the range of $\Q_N$. 
Since $\T$ is assumed compact, we can always choose $N$ sufficiently large such that
\begin{align} \label{eq:pert}
\|\T_N - \T\|_{\XX \to \HS(\YY,\ZZ')} \le \delta,
\end{align}
and we may assume that typically $N \ll m,k$, where $m$, $k$ are the dimensions of the truth approximation used for the computations. 
%


For the stable and efficient numerical solution of the inverse problem \cref{eq:ip}--\cref{eq:def_T}, we may then consider the low-dimensional regularized approximation
\begin{align} \label{eq:cand}
c_{\alpha,N}^\delta = \T_N^\star g_\alpha(\T_N \T_N^\star) \Q_N \M^\delta.
\end{align}
As shown in \cite{Neubauer1988}, the low-rank approximation $c_{\alpha,N}^\delta$ defined in \cref{eq:cand} has essentially the same quality as the infinite dimensional approximation $c_\alpha^\delta$, as long as the perturbation bound \cref{eq:pert} can be guaranteed. 
In the sequel, we therefore focus on the numerical realization of \cref{eq:cand}, which can be roughly divided into the following two stages: \\[-0.5em]
\begin{itemize}\itemsep0.5em
 \item Setup of the approximations $\Q_N$, $\T_N^\star$, and $\T_N \T_N^\star$.  This compute intensive part can be done in an \emph{offline stage} and the constructed approximations can be used for repeated  solution of the inverse problem  \cref{eq:ip} for multiple data.
 \item Computation of the regularized solution \cref{eq:cand}. This \emph{online stage}, which is relevant for the actual solution of \cref{eq:ip}, comprises the following three steps:\\
\begin{center}
\setlength{\tabcolsep}{0.5em}
{\renewcommand{\arraystretch}{1.2}
\begin{tabular}{l|l|l|l}
step        & computations                 & complexity & memory \\
\hline
compression & $\M_N^\delta = \Q_N \M^\delta $ & $N k^2$    & $N k^2$ \\
\hline
analysis    & $z_{\alpha,N}^\delta = g_\alpha(\T_N \T_N^\star) \M_N^\delta $ & $N^2$ & $N^2$ \\ 
\hline
synthesis   & $c_{\alpha,N}^\delta = \T_N^\star z_{\alpha,N}^\delta$ & $N m$ & $Nm$  
\end{tabular}
}
\end{center}
\bigskip
\end{itemize}
%
%
Let us note that the analysis step is completely independent of the large system dimension $k,m$ of the truth approximation and therefore the compression and synthesis step are the compute intensive parts in the online stage. 
If $k^2 > m > k$, which is the typical situation \cite{Chaillat2012,Markel2019}, the data compression turns out to be the most compute and memory expensive step. As we will explain below, the tensor product structure of \cref{eq:def_T} allows us to considerably reduce the memory cost in the compression step.

\subsection{Low-rank approximations}\label{sec:low_rank}

A particular example of a low-rank approximation \eqref{eq:TN} is given by the truncated singular value decomposition $\T_{N^{\text{svd}}}$, for which $\Q_{N^{\text{svd}}}$ amounts to the orthogonal projection onto the $N^{\text{svd}}$-dimensional subspace of the left singular vectors  corresponding to the largest singular values $\sigma_1 \ge \ldots \ge \sigma_{N^{\text{svd}}}$ of the operator $\T$.
By construction of the singular value decomposition, we have 
\begin{align} \label{eq:svd2}
    \|\T_{N^{\text{svd}}} - \T\|_{\XX\to\HS(\YY,\ZZ')} = \sigma_{N^{\text{svd}}+1},
\end{align}
which allows to guarantee the desired accuracy \eqref{eq:pert} by choosing $\sigma_{N^{\text{svd}}+1} \le \delta < \sigma_{N^{\text{svd}}}$.  
From the Eckhard-Young-Mirsky theorem, we can conclude that $N=N^{\text{svd}}$ is the minimal rank of an operator $\T_N$ satisfying the perturbation bound bound \eqref{eq:pert},
{i.e., the truncated singular value decomposition certainly yields the best possible low-rank approximation with a given rank $N$.}

Based on Fourier techniques, fast analytic singular value decompositions for linear operators arising in optical diffusion tomography have been constructed in \cite{Markel2003} for problems with regular geometries and constant coefficients.
In more general situations, the full assembly and decomposition of the operator $\T$ is, however, computationally prohibitive.
Krylov subspace methods \cite{Hochstenbach2000,Stoll2012} and randomized  algorithms \cite{HMT2011,Musco2015} then provide alternatives that allow to construct approximate singular value decompositions using only a moderate number of evaluations of $\T$ and its adjoint $\T^\star$.
By combining randomized singular value decompositions for subproblems associated to a single frequency in a recursive manner, approximate singular value decompositions for inverse medium problems have been constructed in \cite{Chaillat2012}.

In a recent work \cite{Markel2019}, motivated by \cite{Levinson2016} and \cite{Lev-Ari2005}, finite dimensional inverse scattering problems of the particular form
\begin{align}
    T(c):=V^\top D(c) U = M^\delta,
    \qquad \text{with} \qquad 
    D(c)=\operatorname{diag}(c)
\end{align}
are considered. Using the Kathri-Rao product $(A \odot B^\top)_{ij,l} = A_{i,l} B_{j,l}$ for matrices $A,B \in \RR^{k \times m}$ with $ij=(k-1) i+j$ this problem can be cast into 
a linear system
\begin{align}
    (U^\top \odot V) \,  c = \operatorname{vec}(M^\delta),
\end{align}
where $\text{vec}(M) \in \RR^{k^2}$ denotes the vectorization of the matrix $M$ by columns. 
The Khatri-Rao product structure allows the efficient evaluation of $T^\top T$, required for the solution of the inverse problem, using pre-computed low-rank approximations for $U \, U^\top$ and $V \, V^\top$; we refer to  \cite{Kolda2009} for a definition and properties of the Kathri-Rao product and a survey on tensor decompositions. 
Apart from the more restrictive assumptions on the problem structure, the computational cost of the reconstruction algorithms in \cite{Markel2019} is still rather high, since the dimension $m$ of the parameter $c$ still appears in the system, which may be prohibitive for problems with distributed parameters. 

Another popular strategy towards dimension reduction for inverse problems with multiple excitations consists in synthetically reducing the number of sources. Such \emph{simultaneous} or \emph{encoded sources} have been used, e.g., in geophysics \cite{Herrmann2009,Krebs2009} and tomography \cite{Ascher2012}; see \cite{Roosta2014} for further references. 
The systematic construction of low-rank approximations is investigated intensively also in the context of model order reduction; see \cite{Benner2015,Quarteroni2016} for a survey on results in this direction.

{Let us note that in the context of inverse problems, the mapping $\T$ here has to be understood as a linear operator, i.e., a tensor of order $2$, and the norms in which the approximation quality should be measured are prescribed by the functional-analytic setting; see \eqref{eq:svd2}. By the Eckhard-Young-Mirsky theorem, the truncated singular value decomposition therefore yields the optimal low-rank approximation, and the main aspect here is to compute the truncated singular value decomposition of the operator $\T$, or a sufficiently good approximation, with minimal effort.
As we will see, this can be done on a rather general level, only using the particular form \eqref{eq:def_T} and some abstract smoothness conditions on the involved operators.}

\subsection{Contributions and outline of the paper}

The main scope of this paper is the systematic construction and analysis of low-rank approximations $\T_N$ for operators $\T$ of the particular form \eqref{eq:def_T} with 
\begin{itemize}
    \item \textit{certified} approximation error bounds \eqref{eq:pert}, and 
    \item \textit{quasi-optimal} rank $N$ comparable to that of the truncated singular value decomposition.
\end{itemize}
The stable solution of the inverse problem \eqref{eq:ip} can then be achieved by \eqref{eq:cand} in a highly efficient manner. The tensor-product structure \eqref{eq:def_T} will further allow us to 
\begin{itemize}
    \item set up $\T_N$ at essentially the same cost as a \textit{single} evaluation of $\T(c)$;
    \item compress the data $\M^\delta$ on the fly already during recording.
\end{itemize}
Before diving into the detailed analysis of our approach, let us briefly highlight the main underlying principles and key steps of the construction.

\subsubsection{Sparse tensor product compression}\label{sec:tpcompr}

Let $\Q_{K,\U}$, $\Q_{K,\V}$ denote orthogonal projections of rank $K$ in the space $\YY$ of sources and the space $\ZZ$ of detectors, respectively. 
If $\dim(\YY)=\dim(\ZZ)=k$, then clearly $K \le k$. 
One may also use different ranks $K_\U$, $K_\V$ for the two approximations, but for ease of notation we take $K=K_\U=K_\V$. 
In the language of \cite{Herrmann2009,Krebs2009}, the columns of the operators $\Q_{K,\U}$ and $\Q_{K,\V}$ amount to \emph{optimal sources} and \emph{detectors}, and the appropriate choice of  $\Q_{K,\U}$ and $\Q_{K,\V}$ is also related to \emph{optimal experiment design} \cite{Pukelsheim2006}. 
We define corresponding approximations  
\begin{align*}
    \U_K=\U \Q_{K,\U} 
    \qquad \text{and} \qquad
    \V_K=\V \Q_{K,\V}
\end{align*}
for the operators $\U: \YY \to \UU$ and $\VV : \ZZ \to \VV$, each of rank $K$. Since $\U$ and $\V$ are compact operators, we may choose $K$ large enough such that the error in these approximations is as small as desired. 
The resulting tensor product approximation \begin{align*}
    \T_{K,K}(c)
= \V_K' \D(c) \, \U_K
\end{align*}
may then be used as an approximation for $\T$.
Following our notation, we may write $\T_{K,K}=\Q_{K,K} \T$, with $\Q_{K,K}$ denoting a tensor-product projection in data space. 
Unfortunately, the rank of $\T_{K,K}$ is in general $K^2$, which turns out to be typically much larger than the optimal rank achievable by truncated singular value decomposition of the same accuracy.
Instead of $\T_{K,K}$, we therefore consider a hyperbolic-cross approximation \cite{Dung2018}, which has the general form
\begin{align*}
    \T_{\widehat{K}} = \Q_{\widehat{K}} \T_{K,K} = \Q_{\widehat{K}} \T,
\end{align*}
with an orthogonal projection $\Q_{\widehat K}$ onto the $\widehat K$ most significant components in the range of $\T_{K,K}$. 
We will show in detail how to construct sparse-tensor product approximations $\T_{\widehat{K}}$ of any desired accuracy, using knowledge about $\U_K$, $\V_K$ and $\D(c)$ only. 
Moreover, under some mild conditions on the operators $\U$, $\V$, we will see that $\widehat K \approx K$ is sufficient to guarantee essentially the same accuracy as the full tensor-product approximation.  
Let us further note that $\T_{\widehat K}$ does not have a tensor-product structure, but is formally based on a tensor-product approximation, which turns out to be advantageous when computing the projected data $\M^\delta_{\widehat{K}} = \Q_{\widehat{K}} \M^\delta$; see below.  

\subsubsection{Recompression}


By truncated singular value decomposition, we can further reduce the rank of the sparse-tensor product approximation $\T_{\widehat{K}}$ leading to a final approximation 
\begin{align} \label{eq:QN}
\T_N = \Q_N \T 
\qquad \text{with} \qquad 
\Q_N = \P_N \Q_{\widehat{K}} = \P_N \Q_{K,K},
\end{align}
which can be shown to have essentially the rank $N \approx N^\textrm{svd}$ of the truncated singular value decomposition of $\T$ with the same accuracy; see \Cref{sec:recompression} for details.
We thus obtain computable approximations $\T_N$ for $\T$ with essentially the same rank as the truncated singular value decomposition of similar accuracy.
Only some mild assumptions on the mapping properties of the operators $\U$ and $\V$ are required to rigorously establish and guarantee the approximation property \cref{eq:pert}.
%

\subsubsection{Summary of basic properties}

%
It turns out that the proposed two-step construction of the approximation $\T_{N}$, which is based on the underlying tensor-product structure of the problem, has significant advantages compared to the truncated singular value decomposition in the setup, i.e., $\T_N$ can be computed at the computational cost of essentially \emph{one single evaluation of the forward operator $\T(c)$}.  
Moreover, the underlying tensor-product structure also allows to compute the projection 
$\M^\delta_N = \Q_N \M^\delta$  in an efficient manner. By construction of the projection $\Q_N=\P_N \Q_{K,K}$, we have $\M^\delta_N= \P_N \M_{K,K}$ with pre-compressed data 
\begin{align*} 
\M_{K,K}^\delta
= \Q_{K,K} \M^\delta 
= (\Q_{K,\V}' \M^\delta) \Q_{K,\U} 
\end{align*}
that can be computed by two separate projections $\Q_{K,\U}$, $\Q_{K,\V}$ of rank $K$ in the spaces $\YY$ and $\ZZ$ of sources and detectors.
Since the projection $\Q_{K,\V}'$ can already be applied during recording, simultaneous access to the full data $\M^\delta$ is never required. As a consequence, the memory cost of data recording and compression is thereby reduced to $3 K k + K^2$.

%

\subsubsection{Outline}

The remainder of the manuscript is organized as follows: 
In \Cref{sec:2}, we discuss in detail the construction of quasi-optimal low-rank approximations $\T_N$ for problems of the form \cref{eq:def_T} with guaranteed accuracy \cref{eq:pert}.
To illustrate the applicability of our theoretical results, we discuss in \Cref{sec:3} a particular example stemming from fluorescence diffuse optical tomography.
An appropriate choice of function spaces allows us to verify all conditions required for the analysis of our approach.
In \Cref{sec:4}, we report in detail about numerical tests, in which we demonstrate the computational efficiency of the model reduction approach and the resulting numerical solution of the inverse problems.

\section{Analysis of the model reduction approach} \label{sec:2}

We will start with introducing our basic notation and then provide a complete analysis of the data compression and model reduction approach outlined in the introduction. 

\subsection{Notation}

Function spaces will be denoted by $\AA,\BB,\ldots$ and assumed to be separable Hilbert spaces with scalar product $(\cdot,\cdot)_\AA$ and norm $\|\cdot\|_\AA$. 
By $\AA'$ we denote the dual of $\AA$, i.e., the space of bounded linear functionals on $\AA$, and by $\langle a',a\rangle_{\AA' \times \AA}$ the corresponding duality product.
Furthermore, $\L(\AA,\BB)$ denotes the Banach space of linear operators $\S: \AA \to \BB$ with norm $\|\S\|_{\L(\AA,\BB)} = \sup_{\|a\|_\AA=1} \|\S a\|_\BB  < \infty$.
We write $\R(\S)=\{\S a : a \in \AA\}$ for the range of the operator $\S$ and define $\rank(\S) = \dim(\R(\S))$.
By $\S' : \BB' \to \AA'$ and $\S^\star: \BB \to \AA$ we denote the dual and the adjoint of a bounded linear operator $\S \in \L(\AA,\BB)$ defined, respectively, for all $a \in \AA$, $b \in \BB$, and $b' \in \BB'$ by 
\begin{align} \label{eq:dual}
\langle \S' b', a\rangle_{\AA' \times \AA} = \langle b',\S a\rangle_{\BB' \times \BB} 
\qquad \text{and} \qquad 
(\S^\star b,a)_\BB &= (b,\S a)_\AA.
\end{align}
The two operators $\S'$ and $\S^\star$ are directly related by Riesz-isomorphisms.  

Any operator $\S^\delta:\AA \to \BB$ with $\|\S - \S^\delta\|_{\L(\AA,\BB)} \lesssim \delta$ will be called a \emph{$\delta$-approximation for $\S$} in the following.
Let us recall that any compact linear operator $\S : \AA \to \BB$ has a singular value decomposition, i.e., a countable system $\{(\sigma_{k},a_{k},b_k)\}_{k \ge 1}$ such that 
\begin{align} \label{eq:svd}
\S a = \sum\nolimits_{k \ge 1} (a,a_k)_{\AA} \sigma_k  b_k,
\end{align}
with singular values $\sigma_1 \ge \sigma_2 \ge \ldots \geq 0$ and $\{a_k: \sigma_k>0\}$ and $\{b_k:\sigma_k>0\}$ denoting orthonormal basis for $\R(\S^\star) \subset \AA$ and $\R(\S) \subset \BB$, respectively.
Also note that $\|\S\|_{\L(\AA,\BB)} = \sigma_1$ and $\rank(\S)=\sup\{k:\sigma_k>0\}$. 
Moreover, by the Courant-Fisher min-max principle \cite{GolubVanLoan2013}, also known as the Eckart-Young-Mirsky theorem, the $k$th singular value can be characterized by 
\begin{align} \label{eq:minmax}
\sigma_{k}=\min_{\AA_{k-1}}  \max_{a\in \AA_{k-1}^\perp} \|\S a\|_{\BB}/\|a\|_\AA,
\end{align}
where $\AA_{k-1}$ are the $(k-1)$-dimensional subspaces of $\AA$.
Hence every linear compact operator 
$\S: \AA \to \BB$ can be approximated by truncated singular value decompositions
\begin{align} \label{eq:TSVD}
\S_Ka = \sum\nolimits_{k \le K} (a, a_k)_\AA \sigma_k b_k, 
\end{align}
with error $\|\S - \S_K\|_{\L(\AA,\BB)} = \sigma_{K+1}$ and the truncated singular value decomposition can be used to construct $\delta$-approximations of minimal rank. 
%
%

We further denote by $\HS(\AA,\BB) \subset \L(\AA,\BB)$ the Hilbert-Schmidt class of compact linear operators whose singular values are square summable.
Note that $\HS(\AA,\BB)$ is a Hilbert space equipped with the scalar product $(\S,\R)_{\HS(\AA,\BB)} = \sum_{k\geq 1} (\S a_k,\R a_k)_{\BB}$, where $\{a_k\}_{k\geq 1}$ is an orthonormal basis of $\AA$. Moreover, the scalar product and the associated norm are independent of the choice of this basis. 
Let us mention the following elementary results, which will be used several times later on.
\begin{lemma} \label{lem:hs}
(a) Let $\S \in \HS(\AA,\BB)$. Then there exists a sequence $\{\S_K\}_{K \in \NN}$ of linear operators of rank $K$, such that $\|\S - \S_K\|_{\L(\AA,\BB)} \lesssim K^{-1/2}$. 

(b) Let $\S : \AA \to \BB$, $\R : \BB \to \CC$ be two linear bounded operators and at least one of them Hilbert-Schmidt. Then the composition $\R \mycirc \S : \AA \to \CC$ is Hilbert-Schmidt and
\begin{align*} 
\|\R \mycirc \S\|_{\HS(\AA,\CC)} &\le  \|\R\|_{\L(\BB,\CC)} \|\S\|_{\HS(\AA,\BB)}, \qquad \text{or} \\
\|\R \mycirc \S\|_{\HS(\AA,\CC)} &\le  \|\R\|_{\HS(\BB,\CC)} \|\S\|_{\L(\AA,\BB)}.
\end{align*}
Here and below, we use $a \lesssim b$ to express $a \le C b$ with some generic constant $C$ that is independent of the relevant context, and we write $a \simeq b$ when $a \lesssim b$ and $b \lesssim a$. 
\end{lemma}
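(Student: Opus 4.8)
My plan is to prove the two parts separately, since they rest on different facts about Hilbert--Schmidt operators, both ultimately derived from the singular value decomposition \eqref{eq:svd}.

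For part (a), I would start from the singular value decomposition of $\S \in \HS(\AA,\BB)$ and take $\S_K$ to be the truncated singular value decomposition \eqref{eq:TSVD}, which has rank at most $K$. By \eqref{eq:TSVD} the approximation error in operator norm is exactly $\|\S-\S_K\|_{\L(\AA,\BB)}=\sigma_{K+1}$, so it suffices to show $\sigma_{K+1}\lesssim K^{-1/2}$. This follows from square-summability of the singular values: since the $\sigma_k$ are nonincreasing, we have $K\,\sigma_{K+1}^2 \le \sum_{k=1}^{K}\sigma_k^2 \le \|\S\|_{\HS(\AA,\BB)}^2$, and hence $\sigma_{K+1} \le \|\S\|_{\HS(\AA,\BB)} K^{-1/2}$. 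This is the entire content, and the generic constant hidden in $\lesssim$ is $\|\S\|_{\HS(\AA,\BB)}$.

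For part (b), I would fix an orthonormal basis $\{a_k\}$ of $\AA$ and compute $\|\R\S\|_{\HS(\AA,\CC)}^2 = \sum_{k\ge 1}\|\R\S a_k\|_\CC^2$ directly from the basis-independent definition of the Hilbert--Schmidt norm. For the first inequality, I would estimate $\|\R\S a_k\|_\CC \le \|\R\|_{\L(\BB,\CC)}\|\S a_k\|_\BB$ termwise, sum over $k$, and recognize $\sum_k \|\S a_k\|_\BB^2 = \|\S\|_{\HS(\AA,\BB)}^2$; taking square roots gives the claim, and this simultaneously shows $\R\S$ is Hilbert--Schmidt whenever $\S$ is. For the second inequality, where instead $\R$ is Hilbert--Schmidt, I would pass to adjoints: using $\|\R\S\|_{\HS(\AA,\CC)} = \|(\R\S)^\star\|_{\HS(\CC,\AA)} = \|\S^\star\R^\star\|_{\HS(\CC,\AA)}$ together with $\|\R^\star\|_{\HS} = \|\R\|_{\HS}$ and $\|\S^\star\|_{\L}=\|\S\|_{\L}$, the second bound reduces to an application of the first inequality with the roles of the operators exchanged.

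I do not expect any genuine obstacle here; both parts are elementary consequences of the singular value decomposition and the definition of the Hilbert--Schmidt norm. The only points requiring a little care are the invariance of the Hilbert--Schmidt norm under taking adjoints (needed for the second inequality in (b), and already noted in the excerpt to be basis-independent) and the observation that the bounds implicitly establish membership in the Hilbert--Schmidt class, not merely the norm estimate.
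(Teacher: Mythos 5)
Your proposal is correct and follows essentially the same route as the paper's proof: truncated singular value decomposition plus the monotonicity bound $K\sigma_{K+1}^2 \le \sum_{k\le K}\sigma_k^2$ for part (a), and the termwise basis estimate followed by passage to the adjoint $(\R\mycirc\S)^\star = \S^\star\mycirc\R^\star$ for part (b). The only difference is that you spell out the derivation of $\sigma_{K+1}\lesssim K^{-1/2}$, which the paper merely asserts.
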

For convenience of the reader, we provide a short proof of these assertions.
\begin{proof}
The assumption $\S\in\HS(\AA,\BB)$ implies that $\S$ is compact with square summable singular values, and hence $\sigma_{K,\S}\lesssim K^{-1/2}$. 
The truncated singular value decomposition $\S_K$ then satisfies $\|\S - \S_K\|_{\L(\AA,\BB)} \le \sigma_{K,\S} \lesssim K^{-1/2}$ which yields (a).
After choosing an orthonormal basis $\{a_k\}_{k\geq 1}\subset\AA$, we can write
\begin{align*}
\|\R \mycirc \S\|_{\HS(\AA,\CC)}^2
&=
\sum\nolimits_k\|\R \mycirc \S a_k\|_{\CC}^2 \\
&\leq \|\R\|_{\L(\BB,\CC)}^2 \sum\nolimits_{k\geq 1}\|\S a_k\|_{\BB}^2
= \|\R\|_{\L(\BB,\CC)}^2 \|\S\|_{\HS(\AA,\BB)}^2
\end{align*}
which implies the first inequality of assertion (b). 
The second inequality follows from the same arguments applied to the adjoint $(\R \mycirc \S)^\star = \S^\star \mycirc \R^\star$ and noting that the respective norms of an operator and its adjoint are the same. 
\end{proof}

\subsection{Preliminaries and basic assumptions}

We now introduce in more detail the functional analytic setting for the inverse problem \cref{eq:ip} used for our considerations. 
We assume that the operators $\U$, $\V$, $\D$ appearing in definition \cref{eq:def_T} satisfy  
\begin{assumption}\label{ass:1}
Let $\U \in \HS(\YY,\UU)$, $\V \in \HS(\ZZ,\VV)$, and $\D \in \L(\XX,\L(\UU,\VV'))$.
\end{assumption}
Following our convention, all function spaces appearing in these conditions, except the space $\L(\cdot,\cdot)$, are separable Hilbert spaces. 
We can now prove the following assertions.
\begin{lemma} \label{lem:T}
Let \cref{ass:1} be valid. 
Then $\T(c)= \V' \mycirc \D(c) \mycirc \U$ defines a bounded linear operator  $\T:\XX \to \HS(\YY,\ZZ')$ and, additionally, $\T$ is compact.
\end{lemma}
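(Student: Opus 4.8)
The statement asserts two things about $\T(c) = \V' \mycirc \D(c) \mycirc \U$: first that $\T$ maps $\XX$ linearly and boundedly into $\HS(\YY,\ZZ')$, and second that $\T$ is itself compact as an operator from $\XX$ into the Hilbert space $\HS(\YY,\ZZ')$. The natural strategy is to dispatch the boundedness claim using \Cref{lem:hs}(b), and then obtain compactness as a consequence of the factorization together with the Hilbert--Schmidt property, which carries more than mere boundedness.

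\textbf{Step 1: well-definedness and range in $\HS(\YY,\ZZ')$.} For fixed $c \in \XX$, the operator $\D(c)$ lies in $\L(\UU,\VV')$ by \cref{ass:1}, and $\V \in \HS(\ZZ,\VV)$, so its dual $\V' \in \HS(\VV',\ZZ')$ (duality preserves the Hilbert--Schmidt class and the norm, via the Riesz isomorphisms mentioned after \eqref{eq:dual}). I would apply \Cref{lem:hs}(b) twice: first to $\D(c) \mycirc \U$, where $\U \in \HS(\YY,\UU)$ and $\D(c)$ is merely bounded, giving $\D(c) \mycirc \U \in \HS(\YY,\VV')$ with
\[
\|\D(c) \mycirc \U\|_{\HS(\YY,\VV')} \le \|\D(c)\|_{\L(\UU,\VV')}\,\|\U\|_{\HS(\YY,\UU)};
\]
then to $\V' \mycirc (\D(c)\mycirc\U)$, where now the left factor $\V'$ is bounded and the right factor is Hilbert--Schmidt, yielding $\T(c) \in \HS(\YY,\ZZ')$ with
\[
\|\T(c)\|_{\HS(\YY,\ZZ')} \le \|\V'\|_{\L(\VV',\ZZ')}\,\|\D(c)\|_{\L(\UU,\VV')}\,\|\U\|_{\HS(\YY,\UU)}.
\]
Linearity of $c \mapsto \T(c)$ follows from linearity of $\D$, and the chained estimate, together with $\|\D(c)\|_{\L(\UU,\VV')} \le \|\D\|_{\L(\XX,\L(\UU,\VV'))}\|c\|_\XX$, gives boundedness of $\T : \XX \to \HS(\YY,\ZZ')$.

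\textbf{Step 2: compactness.} This is the part requiring genuine thought, since boundedness of a map into $\HS$ does not by itself give compactness. The plan is to exploit that $\U$ and $\V$ are not just bounded but Hilbert--Schmidt, hence compact, and approximable in operator norm by finite-rank truncations via \Cref{lem:hs}(a). I would fix rank-$K$ truncations $\U_K, \V_K$ of $\U, \V$ and set $\T_K(c) = \V_K' \mycirc \D(c) \mycirc \U_K$. Since $\U_K, \V_K$ have finite rank, the range of each $\T_K$ is contained in a fixed finite-dimensional subspace of $\HS(\YY,\ZZ')$ independent of $c$, so $\T_K : \XX \to \HS(\YY,\ZZ')$ has finite rank and is therefore compact. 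It then suffices to show $\|\T - \T_K\|_{\XX \to \HS(\YY,\ZZ')} \to 0$, since the compact operators form a closed subspace of $\L(\XX,\HS(\YY,\ZZ'))$.

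\textbf{Step 3: the norm estimate for convergence.} I would write the telescoping difference
\[
\T(c) - \T_K(c) = (\V'-\V_K')\mycirc\D(c)\mycirc\U + \V_K'\mycirc\D(c)\mycirc(\U - \U_K),
\]
and bound each term by \Cref{lem:hs}(b), placing the Hilbert--Schmidt norm on the truncation differences $\V - \V_K$ and $\U - \U_K$. The main obstacle to watch is where to apply which part of \Cref{lem:hs}(b): the factor $\D(c)$ is only bounded, so the Hilbert--Schmidt control must always sit on a $\U$- or $\V$-type factor, and one must verify $\|\V'-\V_K'\|_{\L}$ and $\|\U - \U_K\|_{\L}$ both tend to zero, which is exactly \Cref{lem:hs}(a) applied to $\U$ and $\V$. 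Using $\|\D(c)\|_{\L} \le \|\D\|\,\|c\|_\XX$ and dividing through by $\|c\|_\XX$, both terms are bounded by a constant times $K^{-1/2}$ uniformly in $c$, so $\|\T - \T_K\|_{\XX\to\HS(\YY,\ZZ')} \lesssim K^{-1/2} \to 0$. This establishes $\T$ as a norm limit of finite-rank operators, hence compact, completing the proof.
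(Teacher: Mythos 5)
Your proposal is correct and takes essentially the same route as the paper: boundedness via the composition estimates of \cref{lem:hs}(b), and compactness by approximating $\U$ and $\V$ with rank-$K$ truncations, telescoping $\T-\T_{K}$, and passing to the norm limit of finite-rank operators. The only slight wobble is in Step 3, where you first say you place the Hilbert--Schmidt norm on the differences $\V-\V_K$, $\U-\U_K$ but then invoke \cref{lem:hs}(a) for their operator norms; to get the stated rate $K^{-1/2}$ you should, as the paper does, keep the Hilbert--Schmidt norm on the untruncated factors and put the operator norm on the differences --- though either placement suffices for compactness.
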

\begin{proof}
Linearity of $\T$ is clear by construction and the linearity of $\U$, $\V$, and $\D$. Now let $\{y_k\}_{k \ge 1}$ denote an orthonormal basis of $\YY$ and let $c \in \XX$ be arbitrary. 
Then 
\begin{align*}
\|\T(c)\|_{\HS(\YY,\ZZ')}^2 
&= \sum\nolimits_{k\geq 1} \|\V' \mycirc \D(c) \mycirc \U y_k \|^2_{\ZZ'} 
 \le \|\V' \mycirc \D(c)\|_{\L(\UU,\ZZ')}^2  \sum\nolimits_{k\geq 1} \|\U \, y_k\|^2_{\UU} \\
& \le \|\V'\|_{\L(\VV',\ZZ')}^2 \|\D\|_{\L(\XX \to \L(\UU,\VV'))}^2 \|c\|_\XX^2 \, \|\U\|_{\HS(\YY,\UU)}^2,
\end{align*}
where we used \cref{lem:hs} in the second step, 
and the boundedness of the operators in the third.
Since $\|\V'\|_{\L(\VV',\ZZ')} = \|\V\|_{\L(\ZZ,\VV)} \le \|\V\|_{\HS(\ZZ,\VV)}$, we obtain 
\begin{align*}
\|\T(c)\|_{\HS(\YY,\ZZ')} \le \|\U\|_{\HS(\YY,\UU)} \|\V\|_{\HS(\ZZ,\VV)} \|\D\|_{\L(\XX,\L(\UU,\VV'))} \|c\|_\XX    
\end{align*} 
for all $c \in \XX$, which shows that $\T$ is bounded.
Using \cref{lem:hs}(a), we can further approximate $\U$ and $\V$ by operators $\U_K$, $\V_K$ of rank $K$, such that 
\begin{align}\label{eq:UV_decay}
\|\U-\U_K\|_{\L(\YY,\UU)} \lesssim K^{-1/2}
\qquad \text{and} \qquad
\|\V-\V_K\|_{\L(\ZZ,\VV)} \lesssim K^{-1/2},
\end{align}
and we can define an operator $\T_{K,K}: \XX \to \HS(\YY,\ZZ')$ by $\T_{K,K}(c) = \V_K' \mycirc \D(c) \mycirc \U_K$,
which defines an approximation of $\T$ of rank $K^2$. 
From \cref{lem:hs}(b), we infer that 
\begin{align*}
&\|\T-\T_{K,K}\|_{\L(\XX, \HS(\YY,\ZZ'))}
 = \sup_{\|c\|_\XX = 1}\|\V' \mycirc \D(c) \mycirc \U- \V_K'  \D(c) \, \U_K\|_{\HS(\YY,\ZZ')}\\
&\leq (\|\V'-\V_K'\|_{\L(\VV',\ZZ')} \|\U\|_{\HS(\YY,\UU)}
+\|\V'\|_{\HS(\VV',\ZZ')} \|\U-\U_K\|_{\L(\YY,\UU)}) \|\D\|_{\L(\XX,\L(\UU,\VV'))}.
\end{align*}
Using \cref{ass:1} and the bounds \cref{eq:UV_decay}, we thus conclude that $\T$ can be approximated uniformly by finite-rank operators, and hence  $\T$ is compact. 
\end{proof}

\subsection{Sparse tensor product approximation}

As  direct consequence of the arguments used in the previous result, we obtain the following preliminary approximation result.
\begin{lemma}\label{lem:tpapprox}
Let \cref{ass:1} hold. 
Then for any  $\delta>0$ there exists $K\in \NN$ with $K\lesssim\delta^{-2}$ and rank $K$ approximations $\U_K = \U \, \Q_{K,\U} $ and $\V_K = \V \,\Q_{K,\V}$ 
such that
\begin{align}
\|\U-\U_K\|_{\L(\YY,\UU)} \le \delta 
\quad\text{and}\quad
\|\V-\V_K\|_{\L(\ZZ,\VV)} \le \delta.
\end{align}
Here, $\Q_{K,\U}$ and $\Q_{K,\V}$ are orthogonal projections on $\YY$ and $\ZZ$, respectively. 
Furthermore, the operator $\T_{K,K}$ defined by $\T_{K,K}(c) = \V_K' \, \D(c) \, \U_K$ has rank $K^2$ and satisfies
\begin{align}\label{eq:approx_N2}
\|\T-\T_{K,K}\|_{\L(\XX,\HS(\YY,\ZZ'))}\lesssim \delta.
\end{align}
If the singular values of $\U$ and $\V$ satisfy $\sigma_{k,\U},\sigma_{k,\V}\lesssim k^{-\alpha}$ for some $\alpha > 1/2$, then the assertions hold with $K\simeq\delta^{-1/\alpha}$, and consequently $\rank(T_{K,K}) \lesssim \delta^{-2/\alpha}$.
\end{lemma}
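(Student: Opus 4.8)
The plan is to derive everything from \Cref{lem:hs}(a) together with the composition estimate in \Cref{lem:hs}(b), essentially re-using the argument already carried out in the proof of \Cref{lem:T}. First I would invoke \Cref{lem:hs}(a) applied to the compact (indeed Hilbert-Schmidt) operators $\U$ and $\V$ to obtain rank $K$ approximations with $\|\U-\U_K\|_{\L(\YY,\UU)} \lesssim K^{-1/2}$ and likewise for $\V$. Choosing $K \gtrsim \delta^{-2}$ then forces both errors below $\delta$, which gives the claimed bound $K \lesssim \delta^{-2}$. I would also note that the truncated singular value decompositions used in \Cref{lem:hs}(a) are of the form $\S_K = \S \, \Q_K$ with $\Q_K$ the orthogonal projection onto the span of the first $K$ right singular vectors, so the operators automatically take the required form $\U_K = \U\,\Q_{K,\U}$ and $\V_K = \V\,\Q_{K,\V}$ with orthogonal projections $\Q_{K,\U}$, $\Q_{K,\V}$ on $\YY$ and $\ZZ$.

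Next I would establish \eqref{eq:approx_N2}. Here I would simply reproduce the telescoping estimate from the proof of \Cref{lem:T}: writing $\V' \D(c) \U - \V_K' \D(c) \U_K = \V'\D(c)(\U-\U_K) + (\V'-\V_K')\D(c)\U_K$ and applying \Cref{lem:hs}(b) to each term, using $\|\U\|_{\HS}$, $\|\V\|_{\HS}$, and $\|\D\|_{\L(\XX,\L(\UU,\VV'))}$ as the bounded factors. Since $\|\U-\U_K\|$ and $\|\V-\V_K\|$ are each $\le \delta$ and the remaining factors are uniformly bounded by \Cref{ass:1}, the supremum over $\|c\|_\XX=1$ is $\lesssim \delta$. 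The rank statement $\rank(\T_{K,K}) = K^2$ follows because $\T_{K,K}(c)$ factors through the $K$-dimensional ranges of $\Q_{K,\U}$ and $\Q_{K,\V}$, so its range in $\HS(\YY,\ZZ')$ lies in the tensor product of two $K$-dimensional spaces.

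For the refined rates under the assumption $\sigma_{k,\U}, \sigma_{k,\V} \lesssim k^{-\alpha}$, I would use the sharp truncation error $\|\U - \U_K\|_{\L(\YY,\UU)} = \sigma_{K+1,\U} \lesssim K^{-\alpha}$ from \eqref{eq:TSVD}. Demanding $K^{-\alpha} \lesssim \delta$ gives $K \simeq \delta^{-1/\alpha}$, and then $\rank(\T_{K,K}) = K^2 \simeq \delta^{-2/\alpha}$. The matching lower bound direction of $K \simeq \delta^{-1/\alpha}$ (rather than merely $\lesssim$) just uses that the truncated SVD is the best rank $K$ approximation, so no rank $K$ operator can do better than $\sigma_{K+1}$, making the relation two-sided as claimed.

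None of the steps presents a genuine obstacle, since the heavy lifting is already done in \Cref{lem:hs} and in the proof of \Cref{lem:T}; the proof is essentially bookkeeping. The only point requiring a little care is the verification that the approximants produced by \Cref{lem:hs}(a) can be written with the orthogonal projection factored on the correct side (i.e. $\U_K = \U\,\Q_{K,\U}$ rather than $\Q\,\U$), which follows from the explicit form of the truncated singular value decomposition in \eqref{eq:TSVD}, where $\Q_{K,\U}$ projects onto $\operatorname{span}\{a_1,\dots,a_K\} \subset \R(\U^\star)$. I would state this observation explicitly to justify the asserted structure of $\U_K$ and $\V_K$.
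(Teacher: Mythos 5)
Your proof is correct and follows essentially the same route as the paper, which presents this lemma as a direct consequence of the arguments already given in the proof of \cref{lem:T}: rank-$K$ truncations of $\U$ and $\V$ via \cref{lem:hs}(a) written as $\U\,\Q_{K,\U}$ and $\V\,\Q_{K,\V}$, a telescoping estimate handled with \cref{lem:hs}(b), and the rank count through the tensor product of the two $K$-dimensional ranges. The only minor overstatement is your claim that $K\simeq\delta^{-1/\alpha}$ is genuinely two-sided, which would additionally require a matching lower bound $\sigma_{K+1}\gtrsim K^{-\alpha}$ on the singular values; this side remark is not needed for the assertions of the lemma.
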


Note that different ranks $K_\U$, $K_\V$ could be chosen for the approximations of $\U$ and $\V$, but for ease of notation, we assume $K_\U=K_\V=K$.
Since $\U$ and $\V$ are Hilbert-Schmidt, we know that $\sigma_{k,\U},\sigma_{k,\V}\lesssim k^{-\alpha}$ with $\alpha \ge 1/2$, 
and, thus, the decay assumption on the singular values are not very restrictive.
%
%
%
In general, $\rank{\T_{K,K}}=K^2$ may however be substantially larger than the optimal rank $N^\text{svd}$ of the truncated singular value decomposition satisfying a similar perturbation bound. 
We now show that based on the approximations $\U_K$, $\V_K$, and assuming sufficient decay of the singular values $\sigma_{k,\U}$, $\sigma_{k,\V}$, one can construct a $\delta$-approximation $\T_{\widehat K}$ for $\T$ of rank $\widehat K \ll K^2$. 
%
\begin{lemma} \label{lem:approx}
Let $\sigma_{k,\U} \lesssim k^{-\beta}$ and  $\sigma_{k,\V} \lesssim k^{-\alpha}$ (or $\sigma_{k,\U} \lesssim k^{-\alpha}$ and  $\sigma_{k,\V} \lesssim k^{-\beta}$) for some $\beta>1/2$ and $\alpha>\beta+1/2$.
Then $\sigma_{k,\T}\lesssim k^{-\beta}$,
and for any $\delta>0$, we can find $\widehat K \in \NN$ with $\widehat K \lesssim \delta^{-1/\beta}$ and an approximation $\T_{\widehat{K}} = \Q_{\widehat{K}} \T$ of rank $\widehat{K}$, such that 
\begin{align}
\|\T - \T_{\widehat K}\|_{\L(\XX,\HS(\YY,\ZZ'))} \lesssim \delta.
\end{align}
\end{lemma}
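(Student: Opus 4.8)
The plan is to diagonalise $\T$ in the tensor-product basis built from the singular systems of $\U$ and $\V$, and then to realise the projection $\Q_{\widehat K}$ as a hyperbolic cross adapted to the products of singular values. Writing the singular value decompositions $\U a = \sum_i \sigma_{i,\U}(a,y_i)_\YY u_i$ and $\V a = \sum_j \sigma_{j,\V}(a,z_j)_\ZZ v_j$, and letting $E_{ij}\in\HS(\YY,\ZZ')$ denote the rank-one operators $E_{ij}\,y = (y,y_i)_\YY\,(\cdot,z_j)_\ZZ$, a short computation using \cref{eq:dual} shows that $\{E_{ij}\}_{i,j\ge 1}$ is orthonormal in $\HS(\YY,\ZZ')$ and that $\T(c) = \sum_{i,j\ge 1}\sigma_{i,\U}\sigma_{j,\V}\,d_{ij}(c)\,E_{ij}$ with $d_{ij}(c) = \langle\D(c)\,u_i,v_j\rangle_{\VV'\times\VV}$. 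The decisive structural observation is that, although $(d_{ij}(c))$ is the matrix of a bounded operator and hence has uniformly $\ell^2$-bounded rows and columns, $\sum_j|d_{ij}(c)|^2\le\|\D\|^2\|c\|_\XX^2$ and likewise over $i$ by duality, it need not be square-summable, since $\D(c)$ is only bounded and not Hilbert--Schmidt. Precisely this failure of square-summability makes a naive thresholding of the weights $\sigma_{i,\U}\sigma_{j,\V}$ fall short of the optimal rank, and it is the main obstacle to be overcome.

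To control it, I would group the indices into dyadic blocks $I_p\times J_q$ with $I_p=\{2^p\le i<2^{p+1}\}$ and $J_q=\{2^q\le j<2^{q+1}\}$, and let $\T_{pq}$ restrict the sum to $(i,j)\in I_p\times J_q$. For distinct $(p,q)$ the ranges of $\T_{pq}$ are mutually orthogonal in $\HS(\YY,\ZZ')$, being spanned by disjoint families of $E_{ij}$. On a single block the weights satisfy $\sigma_{i,\U}\sigma_{j,\V}\lesssim 2^{-p\beta}2^{-q\alpha}$, and estimating the restricted coefficient matrix by \cref{lem:hs}(b), placing the Hilbert--Schmidt factor on the smaller of the two block dimensions, I expect the block bound
\[
\|\T_{pq}\|_{\L(\XX,\HS(\YY,\ZZ'))}\lesssim \|\D\|\,2^{-p\beta-q\alpha+\min(p,q)/2}.
\]
The factor $2^{\min(p,q)/2}$ is exactly the price of $\D(c)$ being merely bounded; it is what distinguishes this problem from the Hilbert--Schmidt case, in which that term would be absent.

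With these pieces in place I would take $\Q_{\widehat K}$ to be the orthogonal projection onto the span of all $E_{ij}$ lying in blocks with $\|\T_{pq}\|_{\L}\gtrsim\delta$, and set $\T_{\widehat K}=\Q_{\widehat K}\T$. The error estimate then follows from the orthogonality of the block ranges, $\|\T-\T_{\widehat K}\|_{\L}^2\le\sum_{(p,q)\ \mathrm{dropped}}\|\T_{pq}\|_{\L}^2$, which is the tail of a geometrically decaying series and is $\lesssim\delta^2$. For the rank I would sum $\widehat K=\sum_{(p,q)\ \mathrm{kept}}|I_p|\,|J_q|\simeq\sum 2^{p+q}$ over the region $p\beta+q\alpha-\tfrac12\min(p,q)\le\log_2(1/\delta)$; here the hypothesis $\alpha>\beta+\tfrac12$ enters decisively, forcing the maximum of $p+q$ on this region to be attained near the corner $q\approx 0$, $p\approx\beta^{-1}\log_2(1/\delta)$, and yielding $\widehat K\lesssim\delta^{-1/\beta}$ rather than the larger count produced by an unbalanced or full tensor-product truncation. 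Finally, the estimate $\sigma_{k,\T}\lesssim k^{-\beta}$ drops out by choosing $\delta\simeq k^{-\beta}$, so that the resulting rank-$\widehat K$ approximation with $\widehat K\lesssim k$ gives $\sigma_{k+1,\T}\le\|\T-\T_{\widehat K}\|\lesssim k^{-\beta}$ via \cref{eq:minmax}. The step I expect to require the most care is the cardinality count on the hyperbolic cross with the $\min(p,q)$ correction, where the balance encoded in $\alpha>\beta+\tfrac12$ must be tracked precisely, together with controlling the mild logarithmic factors arising from the blocks sitting near the threshold.
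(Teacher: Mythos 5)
Your construction is, at heart, the same as the paper's: both realise $\Q_{\widehat K}$ as a projection onto a hyperbolic cross in the tensor-product singular basis of $\U$ and $\V$, and both hinge on exactly the structural point you isolate, namely that the coefficient matrix $d_{ij}(c)=\langle\D(c)u_i,v_j\rangle$ has uniformly $\ell^2$-bounded rows and columns (Bessel plus boundedness of $\D(c)$) but need not be square-summable. The execution differs: the paper works row-by-row, keeping for each singular direction $k$ of $\V'$ the first $L_k=\lfloor \widehat K/k^{1+\eps}\rfloor$ directions of $\U$ with $\eps=(\alpha-\beta-1/2)/(2\beta)$, and bounds the error by pulling out $\sigma_{L_k,\U}$ and applying Bessel in $\ell$ for each fixed $k$; you instead use dyadic blocks $I_p\times J_q$ with the block bound $\|\T_{pq}\|\lesssim 2^{-p\beta-q\alpha+\min(p,q)/2}$ and orthogonality of the block ranges. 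Your block bound, the orthogonality argument, and the cardinality count (where $\alpha>\beta+1/2$ forces the dominant corner $q\approx 0$, $p\approx\beta^{-1}\log_2(1/\delta)$ and makes the geometric sum over $q$ converge) are all correct, as is the deduction $\sigma_{k,\T}\lesssim k^{-\beta}$ from \cref{eq:minmax}.

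The one genuine issue is the point you yourself flag: thresholding the blocks exactly at $\|\T_{pq}\|\simeq\delta$ puts the cut on the critical line, and the dropped blocks along that line each contribute $\simeq\delta^2$ to $\sum_{\text{dropped}}\|\T_{pq}\|^2$ while there are $\simeq\log_2(1/\delta)$ of them, so as written you only get $\|\T-\T_{\widehat K}\|\lesssim\delta\sqrt{\log(1/\delta)}$. This is fixable without changing the rank estimate: keep the blocks with $\|\T_{pq}\|\geq\delta\,2^{-\eta(p+q)}$ for a small $\eta>0$ (small relative to the gaps $\beta-1/2$ and $\alpha-\beta-1/2$), which restores geometric decay of the tail; this is precisely the role of the paper's choice of $\eps$ as \emph{half} of the maximal admissible tilt $(\alpha-\beta-1/2)/\beta$, whereas your threshold corresponds to the full, borderline tilt. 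With that adjustment your argument is complete.
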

\begin{proof}
Let $\{\sigma_{k,*},a_{k,*}, b_{k,*}\}$ denote the singular systems for $\U$ and $\V'$, respectively.
We now show that the hyperbolic cross approximation \cite{Dung2018} 	
\begin{align*}
 \T_{\widehat K}(c) 
 &= \sum\nolimits_{k\geq 1} \sum\nolimits_{\ell=1}^{L_k} \sigma_{\ell,\U} \, \sigma_{k,\V'} \,  (\cdot,a_{\ell,\U})_{\YY} \, \langle \D(c) b_{\ell,\U},a_{k,\V'}\rangle_{\VV'\times\VV} \, b_{k,\V'},
\end{align*}
with the choice $L_k=\lfloor \widehat{K}/k^{1+\eps} \rfloor$, $\widehat{K} \simeq \delta^{-1/\beta}$, and $\eps=(\alpha-\beta-1/2)/(2\beta)>0$ has the required properties.
By counting, one can verify that $\rank(\T_{\widehat K}) \lesssim \sum_{k\geq 1} L_k \lesssim \widehat{K}$, since by construction $L_k \simeq \widehat{K}/k^{1+\eps}$ is summable. 
Furthermore, we can bound
\begin{align*}
\|\T(c) -\T_{\widehat{K}}(c)\|_{\HS(\UU,\VV')}^2 
&= \sum\nolimits_{m\geq 1} \| (\T(c) - \T_{\widehat{K}}(c)) a_{m,\U} \|^2_{\VV'} \\
&= \sum\nolimits_{k\geq 1} \sigma_{k,\V'}^2 \left| \sum\nolimits_{\ell\geq L_k+1} \sigma_{\ell,\U} \langle \D(c) b_{\ell,\U}, a_{k,\V'}\rangle_{\VV' \times \VV} \right|^2 \\
&\le \sum\nolimits_{k\geq 1} \sigma_{k,\V'}^2 \sigma_{L_k}^2 \|\D(c)\|^2_{\L(\UU,\VV')} \|a_{k,\V'}\|_{\VV'}^2.  
\end{align*}
By observing that $\|a_{k,\V'}\|_{\VV'}=1$, $\|\D(c)\|_{\L(\UU,\VV')} \lesssim \|c\|_\XX$, and $\sigma_{k,\V'}=\sigma_{k,\V}$ and by using the decay properties of the singular values, we obtain 
\begin{align*}
\|\T-\T_{\widehat{K}}\|_{\L(\XX,\HS(\UU,\VV'))}^2
&\lesssim \sum\nolimits_{k\geq 1} k^{-2\alpha+2\beta(1+\eps)} \widehat{K}^{-2\beta} 
\lesssim \delta^2.
\end{align*}
In the last step, we used the fact that $-2\alpha+2\beta(1+\eps)<-1$ and $\widehat{K} \simeq \delta^{-1/\beta}$, which follows immediately from the construction.
\end{proof}
\begin{remark} \rm
Comparing the results of \cref{lem:tpapprox,lem:approx}, we expect to obtain a tensor product approximation $\T_{K,K}$ of rank $K^2 \simeq \delta^{-2/\alpha}$ while the hyperbolic cross approximation $\T_{\widehat{K}}$ and consequently also the truncated singular value decomposition of the same accuracy only have rank $\widehat{K} \lesssim \delta^{-1/(\alpha-1/2-\eps)}$, with $\eps=\alpha-\beta-1/2>0$, which may be substantially smaller for $\alpha>1$. 
Note that, like $\T_{K,K}$, the sparse tensor product approximation $\T_{\widehat K}$ can be constructed directly from the low-rank approximations $\U_K$ and $\V_K$.
\end{remark}

\subsection{Quasi-optimal low-rank approximation}\label{sec:recompression}
Let $P_{N^{\text{svd}}}$ denote the orthogonal projection onto the space spanned by the left singular vectors of $\T$ corresponding to the first $N^{\text{svd}}$ singular values, and let $N^\text{svd}$ be chosen such that 
\begin{align*}
    \|P_{N^{\text{svd}}}\T - \T\|_{\L(\XX,\HS(\YY,\ZZ'))} \leq \delta.
\end{align*}
We now show that a compression of any $\delta$-approximation $\T^\delta$, e.g. the tensor product approximation $\T_{K,K}$ or its hyperbolic-cross approximation $\T_{\widehat{K}}$, allows to construct another $\delta$-approximation $\T^\delta_{N^\delta} = P_{N^\delta}^\delta\T^\delta$ for $\T$ with quasi-optimal rank $N^\delta\leq N^{\textrm{svd}}$. 
\begin{lemma}\label{lem:finiterecompression}
Let $\delta>0$ and let $\T^\delta:\XX\to \HS(\YY,\ZZ')$ be a linear compact operator such that $\|\T^\delta-\T\|_{\L(\XX,\HS(\YY,\ZZ'))}\leq C\delta$ for some $C>0$. Let $\P_{N^{\delta}}^{\delta}\T^\delta$ denote the truncated singular value decomposition of $\T^\delta$ with minimal rank $N^{\delta}$ such that
\begin{align}\label{eq:assumption_finite_recompression}
    \|\T^\delta-\P_{N^{\delta}}^{\delta} \T^\delta\|_{\L(\XX,\HS(\YY,\ZZ'))}\leq (C+1)\delta.
\end{align}
Then $N^{\delta}\leq N^{\textrm{svd}}$ and
\begin{align*}
    \|\T-\P_{N^{\delta}}^{\delta} \T^\delta\|_{\L(\XX,\HS(\YY,\ZZ'))}\leq (2C+1)\delta,
\end{align*}
i.e., $\P_{N^{\delta}}^{\delta} \T^\delta$ 
is a $\delta$-approximation for $\T$ with quasi-optimal rank. 
\end{lemma}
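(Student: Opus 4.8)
The plan is to treat the two claims separately, writing $\|\cdot\|$ for the operator norm $\|\cdot\|_{\L(\XX,\HS(\YY,\ZZ'))}$ throughout. The error estimate is immediate from the triangle inequality,
\begin{align*}
\|\T - \P_{N^{\delta}}^{\delta}\T^\delta\|
\le \|\T - \T^\delta\| + \|\T^\delta - \P_{N^{\delta}}^{\delta}\T^\delta\|,
\end{align*}
and inserting the hypothesis $\|\T - \T^\delta\| \le C\delta$ together with the recompression accuracy \cref{eq:assumption_finite_recompression} yields the claimed bound $(2C+1)\delta$; this part requires no structural insight.

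The substance of the lemma lies in the rank estimate $N^\delta \le N^{\text{svd}}$. The idea is \emph{not} to compare the singular values of $\T$ and $\T^\delta$ directly, but to exhibit an explicit rank-$N^{\text{svd}}$ approximation of $\T^\delta$ that already attains the target accuracy $(C+1)\delta$. The natural candidate is $P_{N^{\text{svd}}}\T^\delta$, obtained by applying the optimal projection $P_{N^{\text{svd}}}$ \emph{for} $\T$ to the perturbed operator. I would split
\begin{align*}
(\Id - P_{N^{\text{svd}}})\T^\delta
= (\Id - P_{N^{\text{svd}}})(\T^\delta - \T) + (\Id - P_{N^{\text{svd}}})\T,
\end{align*}
and then use that $\|\Id - P_{N^{\text{svd}}}\| \le 1$ for the orthogonal projection $P_{N^{\text{svd}}}$ in data space, the hypothesis $\|\T^\delta - \T\| \le C\delta$, and the defining property $\|(\Id - P_{N^{\text{svd}}})\T\| \le \delta$ of $N^{\text{svd}}$, to conclude $\|\T^\delta - P_{N^{\text{svd}}}\T^\delta\| \le (C+1)\delta$.

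It then remains to pass from this \emph{particular} rank-$N^{\text{svd}}$ approximation to the \emph{truncated singular value decomposition} of $\T^\delta$. This is where the Eckart-Young-Mirsky optimality \cref{eq:minmax} enters: the rank-$N^{\text{svd}}$ truncated singular value decomposition is the best rank-$N^{\text{svd}}$ approximation of $\T^\delta$, so its error is at most $\|\T^\delta - P_{N^{\text{svd}}}\T^\delta\| \le (C+1)\delta$. Since $N^\delta$ is by definition the \emph{minimal} rank for which the truncated singular value decomposition of $\T^\delta$ reaches accuracy $(C+1)\delta$, and rank $N^{\text{svd}}$ already suffices, we obtain $N^\delta \le N^{\text{svd}}$. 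The one genuinely delicate point in the whole argument is recognizing that one must feed the optimal projection \emph{of} $\T$ into $\T^\delta$ and then invoke optimality of the truncated singular value decomposition \emph{of} $\T^\delta$; a naive singular-value comparison between $\T$ and $\T^\delta$ does not directly deliver the clean bound $N^\delta \le N^{\text{svd}}$.
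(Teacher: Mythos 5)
Your proof is correct and follows essentially the same route as the paper: the key estimate $\|\T^\delta - P_{N^{\text{svd}}}\T^\delta\| \le \|(\Id - P_{N^{\text{svd}}})(\T^\delta-\T)\| + \|(\Id - P_{N^{\text{svd}}})\T\| \le (C+1)\delta$ combined with Eckart--Young--Mirsky optimality for $\T^\delta$ is exactly the paper's Step~1, which the paper merely packages as the general singular-value perturbation bound $\sigma_k^\delta \le \sigma_k + C\delta$ before specializing to $k = N^{\text{svd}}$. (Your closing remark that a ``naive singular-value comparison'' would not suffice is slightly off --- the paper does argue via that comparison; it just proves the comparison by your projection argument --- but this does not affect the validity of your proof.)
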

\begin{proof}
\emph{Step 1.}
We start by recalling a well-known perturbation result for singular values \cite{Kato1966}, i.e., we show that for each $k\in\NN$ one has 
\begin{align}\label{eq:pertub_sing}
\sigma_{k}-C\delta \leq \sigma_k^\delta \leq \sigma_{k}+C\delta,
\end{align}
where $\{\sigma_k\}$ and  $\{\sigma_k^\delta\}$ denote the singular values of $\T$ and $\T^\delta$, respectively. Let us abbreviate $\| \cdot \| = \|\cdot \|_{\L(\XX,\HS(\YY,\ZZ'))}$, choose $\varepsilon>0$, and let $\P_{M}^{\text{svd}}\T$ denote the truncated singular value decomposition of $\T$ with optimal rank such that 
$
\|\P_{M}^{\text{svd}}\T-\T\| < \varepsilon.
$
Using the optimality of $M$ and the non-expansiveness of the projection, we estimate
\begin{align*}
    \| (\Id-\P_{M}^{\delta})\T^\delta\|&\leq \|(\Id-\P_{M}^{\text{svd}})\T^\delta\|\\
    &\leq \|(\Id-\P_{M}^{\text{svd}})\T\|+\|(\Id-\P_{M}^{\text{svd}})(\T-\T^\delta)\|
    \leq \varepsilon + C\delta.
\end{align*}
For $\varepsilon=\sigma_{k+1}$, we have $M=M(\varepsilon)=k$, and we conclude that  $\| (\Id-\P_{M}^{\delta})\T^\delta\|=\sigma_{M+1}^\delta$ and $\sigma_{k+1}^\delta\leq \sigma_{k+1}+C\delta$. 
The second inequality in \eqref{eq:pertub_sing} follows by considering $\T$ as a $\delta$-approximation for the operator $\T^\delta$.

\emph{Step 2.} 
Now let $N^{\delta}$ be as in the statement of the lemma and $N^{\text{svd}}=M(\delta)$ as defined in Step~1. 
Then \eqref{eq:pertub_sing} implies that $\sigma_{N^{\text{svd}}+1}^\delta \leq \sigma_{N^{\text{svd}}+1}+ C\delta \leq (C+1)\delta$. Optimality of $N^{\delta}$ 
then implies $\sigma_{N^{\text{svd}}+1}^\delta\leq\sigma_{N^{\delta}+1}^\delta\leq(C+1)\delta<\sigma_{N^{\delta}}^\delta$, and from the monotonicity of the singular values, we conclude that $N^{\delta}\leq N^{\text{svd}}$. Furthermore,
$$
\|\P_{N^{\delta}}^{\delta} \T^\delta - \T \|\leq \|\P_{N^{\delta}}^{\delta} \T^\delta-\T^\delta\|+\|\T^\delta-\T\|\leq (2C+1)\delta,
$$
i.e., $\P_{N^{\delta}}^{\delta}\T^\delta$ is a $\delta$-approximation for $\T$ with quasi-optimal rank $N^{\delta}\leq N^{\text{svd}}$.
\end{proof}

We now apply the previous lemma with $\T^\delta=\T_{\widehat{K}}$ and $\P_N^\delta=\P_N$ the projection of the  corresponding truncated singular value decomposition, which leads to
\begin{align}
    \T_N = \P_N \T_{\widehat{K}} =  \P_N (\Q_{\widehat{K}}\T).
\end{align}
By construction this is a $\delta$-approximation for $\T$ of quasi-optimal rank $N \le N^{\text{svd}}$.

\subsection{Summary}

Let us briefly summarize the main observations and results of this section. 
Based on 
$\delta$-approximations $\U_K$, $\V_K$ for the operators $\U$ and $\V$,  
we can construct a low-rank $\delta$-approximation $\T_{\widehat{K}}=\Q_{\widehat{K}} \T$ for $\T$ via hyperbolic-cross approximation. 
We observed that in typical situations $\T_{\widehat{K}}$ is of much lower rank than the corresponding full tensor product approximations $\T_{K,K}$,
whose assembly can be completely avoided.
By truncated singular value decomposition of $\T_{\widehat{K}}$, 
we obtained another $\delta$-approximation 
$\T_N = \P_N \T_{\widehat{K}} =  \P_N (\Q_{\widehat{K}}\T)$ with quasi-optimal rank $N \le N^\text{svd}$. 
In summary, we thus efficiently computed a low-rank approximation $\T_N$ for $\T$ with similar rank and approximation properties as the truncated singular value decomposition. 

The analysis in this section was done in abstract function spaces and applies verbatim to infinite-dimensional operators as well as to their finite-dimensional (truth) approximations obtained after discretization. As a consequence, the computational results, e.g., the ranks $K$ and $N$ of the approximations, can be  expected to be essentially independent of the actual truth approximation used for computations.
In the language of model-reduction, the low-rank approximation $\T_N$ is a certified reduced order model.

\section{Fluorescence optical tomography} \label{sec:3}

In order to illustrate the viability of the theoretical results derived in the previous section, we now consider in some detail a typical application arising in medical imaging. 

\subsection{Model equations}

Fluorescence optical tomography aims at retrieving information about the concentration $c$ of a fluorophore inside an object by illuminating this object from outside with near infrared light and measuring the light reemitted by the fluorophores at a different wavelength. 
The distribution $u_x=u_x(q_x)$ of the light intensity inside the object generated by a source $q_x$ at the boundary is described by 
\begin{alignat}{5}
 -\nabla\cdot(\kappa_x \nabla u_x) + \mu_x u_x &= 0, \qquad &&\text{in } \Omega, \label{eq:ux1}\\
 \kappa_x\partial_nu_x + \rho_x u_x &= q_x, \qquad && \text{on } \partial\Omega. \label{eq:ux2}
\end{alignat}
We assume that $\Omega\subset\RR^d$, $d=2,3$, is a bounded domain with smooth boundary enclosing the object under consideration.
The light intensity $u_m=u_m(u_x,c)$ emitted by the fluorophores is described by a similar equation
\begin{alignat}{5}
  -\nabla\cdot(\kappa_m \nabla u_m) + \mu_mu_m &= c u_x, \qquad && \text{in } \Omega, \label{eq:um1}\\
  \kappa_m\partial_nu_m + \rho_m u_m &=0, \qquad &&\text{on } \partial\Omega. \label{eq:um2}
\end{alignat}
The model parameters $\kappa_i$, $\mu_i$, and $\rho_i$, $i=x,m$, characterize the optical properties of the medium at excitation and emission wavelength; we assume these parameters to be known, e.g., determined by independent measurements \cite{ArridgeSchotland2009}. 
As shown in \cite{EggFreiSch10}, the above linear model, which can be interpreted as a Born approximation or linearization, is a valid approximation for moderate fluorophore concentrations.

\subsection{Forward operator}

The forward problem in fluorescence optical tomography models an experiment in which the emitted light resulting from excitation with a known source and after interaction with a given fluorophore concentration is measured at the boundary. 
The measurable quantity is the outward photon flux, which is proportional to $u_m$; see \cite{ArridgeSchotland2009} for details. 
The potential data for a single excitation with source $q_x$ measured by a detector with characteristic $q_m$ can be described by 
\begin{align}\label{eq:forward_fdot}
    \big\langle {\T (c)} \, q_x, q_m \big\rangle = \int_{\partial\Omega} u_m q_m \, ds(x),
\end{align}
where $u_m$ and $u_x$ are determined by the boundary value problems \cref{eq:ux1}--\cref{eq:um2}.
The inverse problem finally consists of determining the concentration $c$ of the fluorophore marker from measurements $\langle \T(c) q_x,q_m\rangle$ for multiple excitations $q_x$ and detectors $q_m$.

We now illustrate that fluorescence optical tomography perfectly fits into the abstract setting of \Cref{sec:2}. 
Let us begin with defining the excitation operator 
\begin{align}\label{eq:op_U}
    \U: H^{1}(\partial\Omega)\to H^{1}(\Omega), \qquad q_x\mapsto \U q_x := u_x,
\end{align}
which maps a source $q_x$ to the corresponding weak solution $u_x$ of \cref{eq:ux1}--\cref{eq:ux2}.
The interaction with the fluorophore can be described by the multiplication operator
\begin{align}\label{eq:multiplication_op}
    \D: L^2(\Omega)\to \mathcal{L}(H^{1}(\Omega),H^{1}(\Omega)'), \qquad \D(c) u = c u. 
\end{align}
In dimension $d \le 3$, the product $c u$ of two functions $c \in L^2(\Omega)$ and $u \in H^1(\Omega)$, lies in $L^{3/2}(\Omega)$ and can thus be interpreted as a bounded linear functional on $H^1(\Omega)$; this shows that $\D$ is a bounded linear operator. 
We further introduce the linear operator 
\begin{align}\label{eq:op_V}
    \V:H^{1}(\partial\Omega) \to H^{1}(\Omega), \qquad q_m\mapsto \V q_m:=v_m,
\end{align}
which maps $q_m$ to the weak solution $v_m$ of the adjoint emission problem
\begin{alignat}{5}
-\nabla\cdot(\kappa_m \nabla v_m) + \mu_m v_m &=  0, \qquad &&\text{in } \Omega, \label{eq:umadj1}\\
\kappa_m\partial_n v_m + \rho_m v_m &= q_m, \qquad &&\text{on } \partial\Omega.\label{eq:umadj2}
\end{alignat}
One can verify that $\V$ is the dual of the solution operator $u_{m\mid\partial\Omega} = \V' \D(c) u_x$ of the system \cref{eq:um1}--\cref{eq:um2}; see \cite{EggFreiSch10} for details. 
Hence we may express the forward operator as 
\begin{align}\label{eq:forward_fdot_TVDcU}
    {\T (c)} = \V' \mycirc \D(c) \mycirc \U.
\end{align}
As function spaces we choose $\UU=\VV=H^1(\Omega)$, $\YY=\ZZ=H^1(\partial\Omega)$, and $\XX=L^2(\Omega)$. 

In order to apply the results of \Cref{sec:2}, it remains to verify \cref{ass:1}. 
We already showed that $\D \in \L(\XX,\L(\UU,\VV'))$ is a bounded linear operator. The following assertion states that also the remaining conditions on $\U$ and $\V$ hold true.
\begin{lemma}\label{lem:applicability}
The operators $\U$ and $\V$ defined in \cref{eq:op_U} and \cref{eq:op_V} are Hilbert-Schmidt and their singular values decay like $\sigma_{k,\U}\lesssim k^{-3/(2d-2)}$ and $\sigma_{k,\V}\lesssim k^{-3/(2d-2)}$.
\end{lemma}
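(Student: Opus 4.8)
The plan is to exploit that $\U q_x$ solves a \emph{homogeneous} elliptic equation in $\Omega$, so that it is completely determined by its boundary data; this lets me reduce the singular value analysis from the $d$-dimensional domain $\Omega$ to the $(d-1)$-dimensional boundary $\partial\Omega$, which is precisely what produces the exponent $3/(2d-2)$ instead of the weaker $3/(2d)$ that a naive interior regularity estimate would give. Concretely, I would factor $\U = E \circ R$, where $R \colon H^1(\partial\Omega) \to H^{1/2}(\partial\Omega)$ is the Robin-to-Dirichlet map $q_x \mapsto \gamma u_x$ (with $\gamma$ the Dirichlet trace) and $E \colon H^{1/2}(\partial\Omega) \to H^1(\Omega)$ is the solution operator of the Dirichlet problem associated to \cref{eq:ux1}. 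Since $u_x$ solves the homogeneous equation with its own trace as Dirichlet datum, one has $E(\gamma u_x) = u_x$, whence $\U = E \circ R$.

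First I would record that $E$ is bounded by well-posedness of the Dirichlet problem, $\|E g\|_{H^1(\Omega)} \lesssim \|g\|_{H^{1/2}(\partial\Omega)}$. Next, elliptic regularity for the Robin problem on the smooth domain $\Omega$ (a gain of $3/2$ derivatives from the boundary) gives $u_x \in H^{5/2}(\Omega)$ with $\|u_x\|_{H^{5/2}(\Omega)} \lesssim \|q_x\|_{H^1(\partial\Omega)}$, and the trace theorem then yields $R q_x = \gamma u_x \in H^2(\partial\Omega)$ with $\|R q_x\|_{H^2(\partial\Omega)} \lesssim \|q_x\|_{H^1(\partial\Omega)}$. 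Thus $R$ maps $H^1(\partial\Omega)$ boundedly into $H^2(\partial\Omega)$, i.e., it smooths by one order on the boundary.

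The decay of the singular values then follows from multiplicativity under composition, $\sigma_{k}(\U) \le \|E\| \, \sigma_k(R)$, together with the factorization of $R$ through the compact embedding $\iota \colon H^2(\partial\Omega) \hookrightarrow H^{1/2}(\partial\Omega)$, which gives $\sigma_k(R) \le \|R\|_{H^1 \to H^2}\, a_k(\iota)$. On the compact $(d-1)$-dimensional manifold $\partial\Omega$, the singular values of the Sobolev embedding $H^t \hookrightarrow H^s$ satisfy $a_k \simeq k^{-(t-s)/(d-1)}$ by Weyl asymptotics for the Laplace--Beltrami operator; with $t=2$, $s=1/2$ this yields $a_k(\iota) \simeq k^{-3/(2(d-1))}$, and hence $\sigma_{k,\U} \lesssim k^{-3/(2d-2)}$. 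Square-summability $\sum_k \sigma_{k,\U}^2 \lesssim \sum_k k^{-3/(d-1)} < \infty$ holds precisely because $d \le 3$, which establishes $\U \in \HS(\YY,\UU)$. Since $\V$ is the solution operator of the structurally identical Robin problem \cref{eq:umadj1}--\cref{eq:umadj2}, the same argument applies verbatim and gives $\V \in \HS(\ZZ,\VV)$ with $\sigma_{k,\V} \lesssim k^{-3/(2d-2)}$.

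I expect the main obstacle to be the reduction to the boundary: one must resist the tempting factorization $\U \colon H^1(\partial\Omega) \to H^{5/2}(\Omega) \hookrightarrow H^1(\Omega)$, whose interior embedding only yields the inferior rate $k^{-3/(2d)}$, and instead recognize that the range of $\U$ consists of homogeneous solutions and is therefore faithfully coordinatized by $(d-1)$-dimensional boundary data through $E$. The remaining technical point is the elliptic regularity estimate $H^1(\partial\Omega) \to H^{5/2}(\Omega)$ (equivalently, that $R$ is an order $-1$ operator on the boundary), which requires sufficient smoothness of $\kappa_i,\mu_i,\rho_i$ and of $\partial\Omega$; these are guaranteed by the standing smoothness assumptions on the geometry and the coefficients.
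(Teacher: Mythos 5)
Your proof is correct, but it reaches the exponent $3/(2d-2)$ by a genuinely different route than the paper. The paper never leaves the source side: it observes that $\U$ extends boundedly to $H^{-1/2}(\partial\Omega)$ data, takes the $L^2$-projection $\Q_h$ onto piecewise linears on the boundary mesh (a rank-$k$ operator with $k\simeq h^{-(d-1)}$), uses the approximation estimate $\|q-\Q_h q\|_{H^{-1/2}(\partial\Omega)}\lesssim h^{3/2}\|q\|_{H^1(\partial\Omega)}$ to get $\|\U-\U\Q_h\|\lesssim h^{3/2}\simeq k^{-3/(2d-2)}$, and concludes via the min-max characterization \cref{eq:minmax}. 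You instead work on the range side: you factor $\U=E\circ R$ through the Robin-to-Dirichlet map, invoke the elliptic shift theorem to show $R$ smooths by $3/2$ orders on $\partial\Omega$, and read off the decay from Weyl asymptotics for the Sobolev embedding $H^2(\partial\Omega)\hookrightarrow H^{1/2}(\partial\Omega)$. Both arguments hinge on the same $3/2$-derivative gap measured on the $(d-1)$-dimensional boundary, and your warning against the naive interior factorization through $H^{5/2}(\Omega)\hookrightarrow H^1(\Omega)$ correctly identifies why the reduction to the boundary is the essential point. The trade-offs: your route gives matching lower bounds in principle (Weyl asymptotics are two-sided) and is coordinate-free, but it requires the full shift theorem $H^1(\partial\Omega)\to H^{5/2}(\Omega)$ and hence smoothness of $\kappa_x,\mu_x,\rho_x$, which the paper does not explicitly assume; the paper's argument needs only the a-priori bound $\|\U q\|_{H^1(\Omega)}\lesssim\|q\|_{H^{-1/2}(\partial\Omega)}$, valid for merely bounded measurable coefficients, and has the added benefit of producing an explicit, computable finite-rank approximant $\U\Q_h$ that matches the discretization actually used in \Cref{sec:4}.
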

\begin{proof}
The Hilbert-Schmidt property follows immediately from the decay behavior of the singular values. 
%
Let $\Th$ be a quasi-uniform triangulation of the domain $\Omega$ of meshsize $h$ and $\partial\Th$ be the induced segmentation of the boundary $\partial\Omega$. Further, let $\YY_h = P_1(\partial\Th) \cap H^1(\partial\Omega) \subset H^{-1/2}(\partial\Omega)$ be the space of piecewise linear finite elements on $\partial\Th$. Let $\Q_h$ be the $L^2$-orthogonal projection onto $\YY_h$ and $q\in H^1(\partial\Omega)$ arbitrary. Then standard approximation error estimates, see e.g., \cite{BS2008}, yield
\begin{align*}
\|q - \Q_h q\|_{H^{-1/2}(\partial\Omega)} 
\lesssim h^{3/2} \|q\|_{H^1(\partial\Omega)}.  
\end{align*}
A-priori estimates for elliptic PDEs
yield $\|\U q\|_{H^1(\Omega)} \lesssim \|q\|_{H^{-1/2}(\partial\Omega)}$, and hence $\U$ can be continuously extended to an operator on $H^{-1/2}(\partial\Omega)$; see e.g. \cite{Evans}. 
This yields
\begin{align*}
\|\U- \U \Q_h\|_{\L(H^1(\partial\Omega),H^1(\Omega))} \lesssim h^{3/2} \lesssim k^{-3/(2d-2)},
\end{align*}
where $k=\dim(\YY_h) = \rank(\Q_h) \simeq h^{-(d-1)}$ is the dimension of the space $\YY_h$.     
From the min-max characterization of the singular values \cref{eq:minmax}, we may therefore conclude that $\sigma_{k,\U} \lesssim k^{-3/(2d-2)}$ as required. 
The result for $\sigma_{k,\V}$ follows in the same way.
\end{proof}

\begin{remark} \rm
If prior knowledge $\operatorname{supp}(c) \subset \Omega$ on the support of the fluorophore concentration is available, which is frequently encountered in practice, 
elliptic regularity \cite{Evans} implies exponential decay of the singular values $\sigma_{k,\U}$ and $\sigma_{k,\V}$.
In such a situation, the ranks $K$ and $N$ in \cref{lem:tpapprox,lem:finiterecompression} will depend only logarithmically on the noise level $\delta$, and an accurate approximation $\T_N$ of very low rank can be found. 
\end{remark}

{
\begin{remark}\rm \label{rem:unique}
If $c_1$ and $c_2$ are two fluorophore concentrations leading to the same measurements, that is $\T(c_1)=\T(c_2)$, then the factorization \cref{eq:forward_fdot_TVDcU} shows that
\begin{align}
    \int_\Omega (c_1-c_2) u_x v_m \,dx =0
\end{align}
for all possible excitation fields $u_x$ satisfying \cref{eq:ux1} and adjoint emission fields $v_m$ satisfying \cref{eq:umadj1}. Under certain regularity conditions, density results for the set of products $\{u_x v_m\}$ imply $c_1=c_2$, see \cite[Chapter~5]{Isakov2017} for precise statements. Hence, in such a situation, the solution $c^\dagger$ of \cref{eq:ip} is unique.
\end{remark}}

\section{Algorithmic realization and complexity estimates} \label{sec:4}

We will now discuss in detail the implementation of the model reduction approach presented in \Cref{sec:2} for the fluorescence optical tomography problem and demonstrate its viability by some preliminary considerations.
For ease of presentation, we consider a simple two-dimensional test problem. Our observations, however, carry over almost verbatim also to three dimensional problems of similar dimensions.

\subsection{Problem setup}
For the discretization of \cref{eq:ux1}--\cref{eq:ux2} and \cref{eq:umadj1}--\cref{eq:umadj2}, we use a standard finite element method with continuous piecewise linear polynomials. 
The computational meshes used for the truth approximations are obtained by successive uniform refinement of the initial mesh, leading to quasi-uniform conforming triangulations $\Th$ of the domain $\Omega$ with $h>0$ denoting the mesh size. Thus, the corresponding spaces $\UU_h,\VV_h\subset H^1(\Omega)$ then have dimension $m\simeq h^{-d}$ each. 
For our test problem, we have $d=2$, since we consider a two-dimensional setting.
We choose the same finite element space $\XX_h$ also for the approximation of the concentration $c$. 
The sources $q_x,q_m$ for the forward and the adjoint problem are approximated by piecewise linear functions on the boundary of the same mesh $\Th$; hence $\YY_h$, $\ZZ_h \subset H^1(\partial\Omega)$ have dimension $k\simeq h^{d-1}$.
All approximation spaces are equipped with the topologies induced by their infinite dimensional counterparts.
Standard error estimates allow to quantify the discretization errors in the resulting truth approximation of the forward operator and to establish the $\delta$-approximation property for $h$ small enough.
The error introduced by the discretization can therefore be assumed to be negligible.

A sketch of the domain $\Omega \subset \RR^2$ and the coarsest mesh $\Omega_h$ used for our computations as well as the parameter $c^\dag$ to be identified are depicted in \cref{fig:setup}.
\begin{figure}[ht!]
\centering
\includegraphics[width=0.25\textwidth]{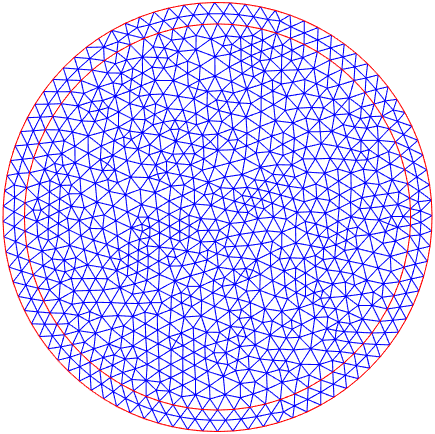}
\hspace*{3em}
\includegraphics[width=0.25\textwidth]{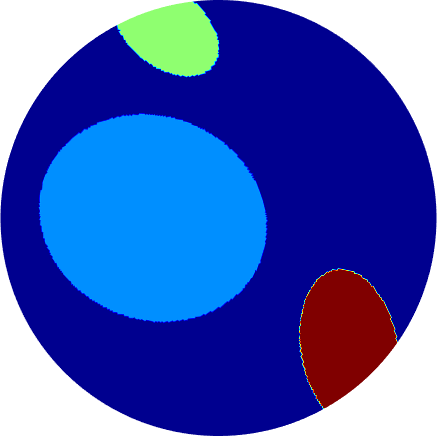}
\hspace*{3em}
\includegraphics[width=0.25\textwidth]{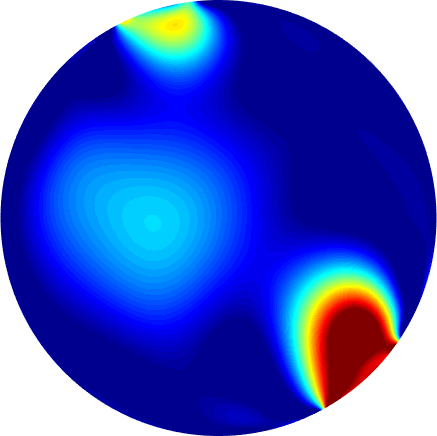}
\caption{Left: computational domain and coarsest mesh used for our computations. Middle: minimum norm solution $c^\dag$. Right: reconstructed fluorophore concentration $c_\alpha^\delta$ for $\delta=10^{-5}$. \label{fig:setup}}
\end{figure}
The characteristic dimension of the relevant function spaces after discretization can be deduced from \cref{tab:dimension}. The numbers in the table also illustrate the assumption $k < m < k^2$ and that the discretized inverse problem is formally overdetermined.
\begin{table}[ht!]
\centering \footnotesize
\setlength{\tabcolsep}{8pt} 
\renewcommand{\arraystretch}{1.2} 
\caption{Dimensions $m=\dim(\XX_h)=\dim(\UU_h)=\dim(\VV_h)$ and $k=\dim(\YY_h)=\dim(\ZZ_h)$ of the relevant function spaces 
and discretization errors $de_h=\|\T_h - \T\|_{\L(\XX,\HS(\YY,\ZZ'))}$ between approximation $\T_h$ on refinement level $\text{ref}$ and the truth approximation $\T$ on level $\text{ref}=5$. The norm of the forward operator is $\|\T_h\|_{\L(\XX,\HS(\YY,\ZZ'))}=0.2804$ on all mesh levels.
\label{tab:dimension}}
\begin{tabular}{c||r|r|r|r|r|r}
    ref      &  0 & 1 & 2 & 3 & 4 & 5 \\
    \hline
    \hline
    $m$      &  993 & 3\,881 & 15\,345 & 61\,025 & 243\,393 & 927\,161 \\
    \hline
    $k$      & 88 & 176 & 352 & 704 & 1\,408 & 2\,816 \\
    \hline
    \hline
    $de_h$ & $6.31 \cdot 10^{-4}$ & $1.69\cdot 10^{-4}$ & $4.34\cdot10^{-5}$ & $1.10\cdot10^{-5}$ & $2.75\cdot10^{-6}$ & ---
\end{tabular}
\end{table}
%
Note that an approximation on level $\textrm{ref}\ge4$ is required to guarantee a discretization error of $de_h =\|\T_h - \T\|_{\L(\XX;\HS(\YY,\ZZ'))} \le 10^{-5}$.
Further observe that for the finest mesh level $\textrm{ref}=5$, the discretized forward operator amounts to a linear mapping $\ttT$ from $\RR^{927\,161}$ to $\RR^{2\,816 \times 2\,816}$.  The storage of the matrix $\ttA$ representing the forward operator would require approximately 56TB of memory and even one single evaluation of $\ttT(\ttc)$  via the matrix product $\ttA \ttc$ would require approximately 7Tflops.  
It should be clear that more sophisticated algorithms are required to make even the evaluation of the forward operator feasible.

\subsection{Truth approximation} \label{sec:41}
Let us briefly discuss in a bit more detail the algebraic structure of the resulting problems arising in the truth approximation.
Under the considered setup, the finite element approximation of problem \cref{eq:ux1}--\cref{eq:ux2} leads to the linear system 
\begin{align}\label{eq:fem_ux}
(\ttK_\ttx+\ttM_\ttx+\ttR_\ttx) \, \ttU &= \ttE_\ttx\ttQ_\ttx.
\end{align}
Here $\ttK_\ttx, \ttM_\ttx \in \RR^{m\times m}$ are the stiffness and mass matrices with coefficients $\kappa_x$, $\mu_x$, and the matrices $\ttR_\ttx\in\RR^{m\times m}$, $\ttE_\ttx \in \RR^{m \times k}$ stem from the discretization of the boundary conditions.
The columns of regular $\ttQ_\ttx \in \RR^{k \times k}$ represent the individual independent sources in the basis of $\YY_h$. 
Any excitation generated by a source in $\YY_h$ can thus be expressed as a linear combination of columns of the excitation matrix $\ttU \in \RR^{m\times k}$, which serves as a discrete counterpart of the operator $\U$.
In a similar manner, the discretization of the adjoint problem \cref{eq:umadj1}--\cref{eq:umadj2} leads to  \begin{align}\label{eq:fem_vm}
(\ttK_\ttm + \ttM_\ttm + \ttR_\ttm) \, \ttV &= \ttE_\ttm \ttQ_\ttm.
\end{align}
whose solution matrix $\ttV\in\RR^{m\times k}$ can be interpreted as the discrete counterpart of the operator $\V$.
The system matrices $\ttK_\ttm$, $\ttM_\ttm$, $\ttR_\ttm$, and $\ttE_\ttm$ have a similar meaning as above, and the columns of $\ttQ_\ttm$ represent the individual detector characteristics. Recall from \Cref{sec:reginv} that $k=k_\YY=k_\ZZ$ only to simplify exposition.
The algebraic form of the truth approximation finally reads
\begin{align} \label{eq:truth}
\ttT(\ttc) = \ttV^\top \, \ttD(\ttc) \, \ttU,
\end{align}
where $\ttD(\ttc)\in\RR^{m\times m}$ is the matrix representation of the finite element approximation for the operator $\D(c_h)$ with $\ttc \in \RR^m$ denoting the coordinates of the function $c_h\in\XX_h$.
The discrete measurement $\ttM_{ij} = (\ttV^\top \ttD(\ttc) \ttU)_{ij}=\ttV(:,i)^\top \ttD(\ttc) \ttU(:,j)$ then approximates the data taken by the $i$th detector for excitation with the $j$th source.

From the particular form \cref{eq:truth} of the forward operator, 
one can see that only the matrices $\ttU$, $\ttV$, and a routine for the application of $\ttD(\ttc)$ are required to evaluate $\ttT(\ttc)$. 
In our example, the application of $\ttD(\ttc)$ amounts to the multiplication by a diagonal matrix{, which leads to a complexity of $O(k^2m)$ flops for the application and a memory requirement of $O(2km+m)$ bytes for the forward operator.}
Note that the matrices $\ttU$ and $\ttV$ can now be stored on a standard workstation, while the application of the forward operator is still too compute intensive to be useful for the efficient solution of the inverse problem under consideration. 
\begin{remark} \rm \label{rem:orthornormal}
Let $\ttS\ttY,\ttS\ttZ$ be the matrix representation of the $H^1(\partial\Omega)$ inner products for the spaces $\YY_h$, $\ZZ_h$. Furthermore, let $\ttA\ttY, \ttA\ttZ \in \RR^{k \times k}$ be orthogonal with respect to $\ttS\ttY$ and $\ttS\ttZ$, i.e., $\ttA\ttY^\top * \ttS\ttY * \ttA\ttY = \ttI$ and $\ttA\ttZ^\top * \ttS\ttZ * \ttA\ttZ = \ttI$. 
Then $\ttA\ttY = \ttQ\ttx *\ttA\ttx$ and $\ttA\ttZ=\ttQ\ttm * \ttA\ttm$ with $\ttA\ttx=\ttQ\ttx^{-1} * \ttA\ttY$ and $\ttA\ttm=\ttQ\ttm^{-1} * \ttA\ttZ$, and the Hilbert-Schmidt norm of the measurement matrix $\ttM=\ttT(\ttc)$ can be expressed by the Frobenius norm
\begin{align*}
\|\ttM\|_{\HS} := \|\ttA\ttm^\top * \ttM * \ttA\ttx \|_{\fro}.     
\end{align*}
Note that we simply have $\|\ttM\|_{\HS} := \|\ttM\|_\fro$ if the columns of $\ttQ\ttx$, $\ttQ\ttm$ are chosen orthonormal with respect to the $\ttS\ttY$ and $\ttS\ttZ$ inner products right from the beginning. 
We will use this fact in our numerical tests below.
\end{remark}
\begin{remark}
Using multigrid solvers, the matrices $\ttU$ and $\ttV$ can be computed in  $O(mk)$ operations \cite{FreibergerEtAl11}, which is, at least asymptotically, negligible compared to the application of $\ttT(\ttc)$ in tensor product form.
In our computational tests, we utilize sparse direct solvers for the computation of $\ttU$ and $\ttV$, for which the computational cost is $O(m^{3/2} + k m \log(m))$. 
Since $m^{3/2} \le m k$ and $\log(m) \le k$ in our two-dimensional setting, this is still of lower complexity than even a single evaluation of $\ttT(\ttc)$.
\end{remark}

\subsection{Model reduction -- offline phase}\label{sec:num.offline}
With $\ttU$ and $\ttV$ obtained, we are now in the position to compute our reduced order model.

\subsubsection{Orthonormalization} \label{sec:422}
Let  \verb+SX,SY,SZ+ be the Gram matrices representing the scalar products of the function spaces $\XX_h$, $\YY_h$, $\ZZ_h$. 
As a next step, we compute the approximations for the singular value decompositions of the excitation and emission operators. For this, we recall that the right singular vectors of an operator $\U$ correspond to the eigenvectors of $\U^\star \U$. The singular value decompositions for the matrices $\ttU$ and $\ttV$ can thus be computed by the generalized eigenvalue decompositions 
\smallskip
\begin{verbatim}
        [Ax,Dx]=eigs(U'*SX*U,SY);  
        [Am,Dm]=eigs(V'*SX*V,SZ);  
\end{verbatim}
\smallskip
Note that some slight modifications would be required here, if the source and detector matrices \verb+Qx+ and \verb+Qm+ would not be chosen as the identity matrices. 
The columns of \verb+Ax+ and \verb+Am+ are orthogonal with respect to the \verb+SY+ and \verb+SZ+ scalar product and thus define bases of the discrete source and detector spaces. After appropriate scaling, the columns can be assumed to be normalized such that \verb+Ax'*SY*Ax+ and \verb+Am'*SZ*Am+ equal the identity matrix.
To simplify the subsequent discussion, we change the definition of the sources and detectors as well as of the excitation and emission matrices, and redefine the forward operator according to
\smallskip
\begin{verbatim}
        Qx=Qx*Ax;  U=U*Ax;
        Qm=Qm*Am;  V=V*Am;  
        T=@(c) V'*D(c)*U;
\end{verbatim}
\smallskip
The columns of \verb+Qx+ and \verb+Qm+ are now orthogonal with respect to the \verb+SY+ and \verb+SZ+ scalar products, and as a consequence, the Hilbert-Schmidt norm in the measurement space amounts to the Frobenius norm of \verb+M=T(c)+; see \cref{rem:orthornormal} for details.

{The memory cost for storing $\ttA\ttY=\ttU'*\ttS\ttX*\ttU$ and $\ttA\ttZ=\ttV'*\ttS\ttX*\ttV$ amounts to $O(k^2)$ bytes, while the computation of the matrix products requires $O(k^2 m)$ flops. 
The complexity for the eigenvalue decompositions finally is $O(k^3)$ flops.
Note that the setup of the matrices $\ttA\ttY$ and $\ttA\ttZ$ has the same cost as a single evaluation $\ttT(\ttc)$ of the forward operator.
The additional memory required for storing the $k \times k$ matrices $\ttA\ttY$ and $\ttA\ttZ$ is negligible.} 

\subsubsection{Low-rank approximations for $\ttU$ and $\ttV$} \label{sec:423}

The eigenvalues computed in the decompositions above correspond to the square of the singular values of $\ttU$ and $\ttV$. 
We here allow for different ranks in the approximation and define
truncation indices
\smallskip
\begin{verbatim}
        dx=diag(Dx); xKK=find(dx>delta^2); xK=length(xKK); 
        dm=diag(Dm); mKK=find(dm>delta^2); mK=length(mKK);
\end{verbatim}
\smallskip
We could further set \verb+K=max(xK,mK)+ 
to stay exactly with the notation used in \Cref{sec:2}, but we stress that our implementation is in full generality.
The low-rank approximations for $\ttU$ and $\ttV$ and the resulting tensor product approximation of the forward operator $\ttT(\ttc)$ are then given by 
\smallskip
\begin{verbatim}
        QxK=Qx(:,xKK);  UK=U(:,xKK);
        QmK=Qm(:,mKK);  VK=V(:,mKK);
        TKK=@(c) VK'*D(c)*UK;
\end{verbatim}
\smallskip
Observe that the measurements \verb+MKK=TKK(c)+ obtained by this approximation correspond to a sub-block of the full measurements, i.e., \verb+MKK=M(mKK,xKK)+.  

\subsubsection{Hyperbolic cross approximation} \label{sec:424}

The proof of \cref{lem:approx} shows that we may replace the tensor product operator \verb+TKK(c)+ by the hyperbolic cross approximation \verb+TK(c)+, which takes into account only the entries \verb+M(k,l)=MKK(k,l)+ of the measurements \verb+M=T(c)+ for indices $\verb+k+\cdot\verb+l+ \le N \lesssim \delta^{-\beta}$. 
In our computations, we actually replace $N$ by $K$, i.e., we utilize the hyperbolic cross approximation $\verb+TK(c)+$ of \verb+TKK(c)+. 
The assembly of the matrix representation $\ttA\ttK$ for $\ttT\ttK(\ttc)=\ttA\ttK*\ttc$ then reads
\begin{verbatim}
        m=0;
        for k=1:K
            for l=1:floor(K/k)
                m=m+1;
                AK(m,:)=(VK(:,k)'.*UK(:,l)')*DD;
            end
        end
\end{verbatim}
\smallskip 
Here, \verb+DD+ is a diagonal matrix representing the numerical integration on the computational domain.
Let us note that $\ttA\ttK$ and thus also the operator \verb+TK(c)+ do not have a tensor product structure any more; therefore the measurements \verb+MK=TK(c)+ are stored as a column vector rather than a matrix. 
The norm in the reduced measurement space then is the Euclidean norm for vectors.
Also note that the construction of $\ttA\ttK$ and \verb+TK+ only requires access to the matrices \verb+UK+ and \verb+VK+ defining the operator \verb+TKK+.

The tensor product approximation \verb+TKK(c)=VK'*D(c)*UK+ requires only subblocks \verb+UK, VK+ of the excitation and emission matrices \verb+U, V+ and, therefore, no additional memory cost arises in setting up this approximation. 
To achieve a $\delta$-approximation with $\delta=10^{-3}$, for instance, we expect to require approximately $\ttK=100$ singular components of \verb+U+ and \verb+V+; see \cref{lem:applicability} for details. The tensor product approximation will then have rank $\ttK^2=10^4$. 
For the hyperbolic cross approximation \verb+TK+, we however expect to require only approximately $2\ttK=200$ components of the tensor product approximation \verb+TKK+; compare with \cref{lem:approx}. 
In \cref{tab:approx} we summarize the expected memory and computational cost for the corresponding approximations. 
\begin{table}[ht!]
\centering \footnotesize
\setlength{\tabcolsep}{8pt} 
\renewcommand{\arraystretch}{1.2} 
\caption{Memory and computation cost for storing and applying the tensor product approximation $\ttT\ttK\ttK$ of rank $\ttK^2=10^4$ and the corresponding hyperbolic cross approximation $\ttT\ttK$ of rank $2\ttK=200$.
The theoretical memory and computation cost is given by \emph{mem($\ttT\ttK\ttK$)}$=$\emph{mem($\ttT\ttK$)}$=16\ttK m$ \emph{bytes}, \emph{ops($\ttT\ttK\ttK(\ttc)$)}$=\ttK m+\ttK^2m$ \emph{flops}, and 
\emph{ops($\ttT\ttK(\ttc)$)}$=2\ttK m$ 
\emph{flops}, respectively.\label{tab:approx}}
\begin{tabular}{c||c|c|c|c|c|c}
    ref      &  0 & 1 & 2 & 3 & 4 & 5 \\
    \hline
    \hline
    mem (GB)        & 0.001 & 0.006 & 0.023 & 0.009 & 0.036 & 1.448 \\
    \hline
    ops ($\ttT\ttK\ttK(\ttc)$, Gflop)  & 0.009 & 0.037 & 0.144 & 0.574 & 2.289 & 9.144 \\
    \hline 
    ops ($\ttT\ttK(\ttc)$, Gflop)   & 0.000 & 0.001 & 0.003 & 0.011 & 0.045 & 0.181 \\
\end{tabular}
\end{table}
Note that the tensor product structure allows to store the tensor product approximation \verb+TKK+ as efficiently as the hyperbolic cross approximation \verb+TK+. The application of the latter is, however, substantially more efficient. 

\subsubsection{Final recompression} \label{sec:425}

The last step in our model reduction approach consists in a further compression of the hyperbolic cross approximation \verb+TK+ of the tensor product operator \verb+TKK+; cf. \Cref{sec:recompression} for details. This can be realized by 
\smallskip
\begin{verbatim}
        AKt=DD\AK';
        AKAKt=AK*AKt;
        [VA,DA]=eigs(AKAKt,NK);
\end{verbatim}
\smallskip 
where \verb+NK=size(AK,1)+ is the number of terms used for the hyperbolic cross approximation. The recompression then consists of selecting the largest entries, i.e., 
\smallskip 
\begin{verbatim}
        N=find(diag(DA)>delta^2);
        AN=AK*VA(:,N); ANt=DD\AN';
\end{verbatim}
\smallskip     
Matrix representations for the projection operators $\Q_{K,K}$, $\P_K$, and $\P_N$ corresponding to the tensor product, the hyperbolic cross, and the final approximation, can be assembled easily from the eigenvectors computed during the construction. 

As before, the compression is based on singular value decompositions of operators via the solution of generalized eigenvalue problems for the matrices 
\verb+BKK=AKK*(DD\AKK')+ respectively \verb+BK=AK*(DD\AK')+, 
where \verb+AKK+ and \verb+AK+ are the matrix representations for the operators \verb+TKK(c)+ and \verb+TK(c)+ respectively. 
The computational cost for the assembly of \verb+BKK+ and \verb+BK+ is listed in \cref{tab:final}.
For an evaluation of the computational complexity, we again assume that \verb+TKK+ has rank $\ttK^2=10^4$ and that \verb+TK+ is of rank $2\ttK=200$.
\begin{table}[ht!]
\centering \footnotesize
\setlength{\tabcolsep}{8pt} 
\renewcommand{\arraystretch}{1.2} 
\caption{Complexity for computing 
\emph{$\ttB\ttK\ttK=\ttA\ttK\ttK{\rm *}(\ttD\ttD\backslash\ttA\ttK\ttK'$)} and \emph{$\ttB\ttK=\ttA\ttK{\rm *}(\ttD\ttD\backslash\ttA\ttK'$)}.
The estimates are 
\emph{mem($\ttB\ttK\ttK$)}$=8\ttK^4$ \emph{bytes}, \emph{ops($\ttB\ttK\ttK$)=}$m\ttK^4$ \emph{flops} 
and \emph{mem($\ttB\ttK$)=}$32\ttK^2$ \emph{bytes}, \emph{ops($\ttB\ttK$)=}$2\ttK$ \emph{flops}. 
\label{tab:final}}
\begin{tabular}{c||l|l|l|l|l|l}
    ref                &  0 & 1 & 2 & 3 & 4 & 5 \\
    \hline
    \hline
    ops ($\ttB\ttK\ttK$, Gflop)   & 92.480 & 361.44 & 1\,429.1 & 5\,683.4 & 22\,667 & 90\,540 \\
    \hline
    ops ($\ttB\ttK$, Gflop)    & 0.000 & 0.001 & 0.003 & 0.011 & 0.045 & 0.181 \\
\end{tabular}
\end{table}
Let us note that the required memory for storing \verb+BKK+ and \verb+BK+ is independent of the mesh size; for the setting considered here, it is given by \verb+mem(BKK)+$=745$MB and \verb+mem(BK)+$=0.3$MB.
Assuming that an eigenvalue decomposition of an $n \times n$ matrix needs roughly \verb+ops(eig)+$=50n^3$ operations, we obtain \verb+ops(eig(BKK))+$=46\,566$ \verb+Gflops+ and \verb+ops(eig(BK))+$=0.373$ \verb+Gflops+. 
Even if a computationally more efficient low-rank approximation \cite{HMT2011,Stoll2012} for the tensor product operator \verb+TKK+ would be used, the evaluation of \verb+TKK(c)+ remains rather expensive; see \cref{tab:approx} for details.
Therefore, the tensor product approximation \verb+TKK+ is not really useful for the computation of low-rank approximation on large computational meshes.
As shown in \Cref{sec:2}, a quasi-optimal approximation \verb+TN+ can be computed also by truncation of the singular value decomposition of the hyperbolic cross approximation \verb+TK+, which does not require any additional computations.

\subsection{Online phase}

After the construction of the low-rank approximation \verb+TN(c)+ as outlined above, the actual solution of the inverse problem consists of three basic steps; see \Cref{sec:1} for a brief explanation. 
The first step is the \emph{data compression} which can be expressed as {\texttt{MN=PN*vec(MKK)} with $\ttM\ttK\ttK=(\ttQ\ttm\ttK'{\rm *}\ttM){\rm *}\ttQ\ttx\ttK$}. Here we make explicit use of the tensor product structure, which allows us to efficiently compress the data already during recording. After this, only the second projection \verb+PN+ has to be applied. The additional memory required for computing {$\ttM\ttK\ttK$ is $O(k \ttK)$ bytes for each, $\ttQ\ttx\ttK$, $\ttQ\ttm\ttK$, and $\ttQ\ttm\ttK'{\rm *}\ttM$ and thus negligible.}
Note that storing the full data \verb+M+ requires {$O(k^2)$ bytes} 
which is substantially higher.
{The computational cost of the data compression step is $O(\ttK^2 k+k^2 \ttK)$.}
As mentioned before, the data can be partially compressed already during recording, such that access to the full data is actually never required. 

The final compression \verb+MN=PN*MKK+ is independent of the system dimensions $m,k$ and its computational cost is therefore negligible. 
The same applies for the solution of the regularized inverse problem \verb~zadN=(ANANt+alpha*I)\MN~, which is the second step in the online phase and only depends on the dimension \verb+N+ of the reduced model. 

The synthesis of the solution according to \cref{eq:cand} can finally be realized by simple multiplication \verb+cadN=ANt*zadN+, where \verb+ANt+ denotes the matrix representation of the adjoint of the fully reduced forward operator \verb+TN+. {The additional memory required for $\ttA\ttN\ttt$ is $O(m\ttN)$ bytes, whereas the computation of $\ttc\tta\ttd\ttN$ can be accomplished in $O(m\ttN)$ flops.}
Thus, as claimed in the introduction, the most compute intensive part of the online phase is the data compression, even if the tensor product structure is utilized to compress the data already during recording.

\section{Computational results} \label{sec:45}

We now illustrate the practical performance of our model reduction approach for the test problem introduced in the previous section.
For comparison, we also report on corresponding results for 
traditional iterative methods for solving the inverse problem \cref{eq:ip}--\eqref{eq:def_T}, as well as for methods based on a tensor-product approximation. 
A snapshot of the geometry, the exact solution, and a typical reconstruction is depicted in \cref{fig:setup}.

For our numerical tests, the model parameters are set to 
$\kappa_x=1$, $\mu_x=0.2$, $\rho_x=10$ and 
$\kappa_m=2$, $\mu_m=0.1$, and $\rho_m=10$.
We further assume prior knowledge that $c$ is supported in a circle of radius $0.9$, i.e., the distance of its support to the boundary $\partial\Omega$ is at least $0.1$. 
The singular values of the operators $\U$ and $\V$ as well as of $\T$ can thus be assumed to decay exponentially.
The noise level in \cref{eq:noise} is set to $\delta=10^{-5}$ and the regularization parameter $\alpha$ is chosen from $\{10^{-n}\}$ via the discrepancy principle. In all our computations, this led to $\alpha=10^{-8}$ which complies to the theoretical prediction for exponentially ill-posed problems \cite{EHN96}. 

Let us note that, in order to ensure sufficient accuracy $\|\T_h - \T\|_{\L(\XX;\HS(\YY,\ZZ'))} \le \delta$ of the truth approximation, one should utilize a discretization of the forward operator on mesh level $\text{ref} \ge 4$ for the reconstruction; see Table~\ref{tab:dimension} and Section~\ref{sec:1}.  
For evaluation of computational performance, we however also report about results on coarser meshes.

All computations are performed on on a workstation with Intel(R) Xeon(R) Gold 6130 CPU @ 2.10GHz and 768GB of memory. In our tests we use only a single core of the processor and an implementation in \textsc{Matlab} 9.6.0.

\subsection{Problem initialization}

This step consists of setting up the excitation and emission matrices $\ttU$, $\ttV$, which are required for the efficient evaluation of $\ttT(\ttc)$. 
In \cref{tab:init}, we also report about the singular value decomposition of $\ttU$ and $\ttV$, by which we orthogonalize the sources and detectors; as mentioned in \Cref{sec:num.offline}, this is required for computation of the Hilbert-Schmidt norm $\|\M^\delta\|_{\HS(\YY,\ZZ')}$ of the measurement operator.
\begin{table}[ht!]
\centering \footnotesize
\setlength{\tabcolsep}{8pt} 
\renewcommand{\arraystretch}{1.2} 
\caption{Computation times (sec) for the individual steps in the problem setup phase.\label{tab:init}}
\begin{tabular}{c||l|l|l|l|l|l}
    ref
        &  0 & 1 & 2 & 3 & 4 & 5 \\
    \hline
    \hline
    initialization of $\ttU,\ttV$  
        & 0.01 & 0.06 & 0.34 & 3.45 & 47.74 & 612.63 \\
    \hline
    setup of $\ttU'*\ttD\ttX*\ttU$, $\ttV'*\ttD\ttX*\ttV$       
        & 0.00 & 0.01 & 0.16 & 2.24 & 30.68 & 460.13 \\
    \hline
    eigenvalue decompositions 
        & 0.01 & 0.03 & 0.24 & 2.15 & 33.44 & 470.88 \\
    \hline
    orthogonalization of $\ttU$, $\ttV$ 
        & 0.00 & 0.02 & 0.09 & 1.70 & 25.12 & 363.64 \\
\end{tabular}
\end{table}
While the theoretical complexity of the first and third step is somewhat smaller than that of the second and fourth step, the overall computation times for the individual steps in the setup phase are comparable. 
Note that the computations reported in Table~\ref{tab:init} are required for the solution of the inverse problem \eqref{eq:ip}, independent of the particular solution strategy, and they can be performed in a pre-processing step

\subsection{Model reduction -- offline phase}

The singular values computed in the decompositions of $\ttU$ and $\ttV$ allow to determine the truncation indices $\ttx\ttK$ and $\ttm\ttK$ used to define the $\delta$-approximations \verb+UK=U(:,xKK)+ and \verb+VK=V(:,mKK)+. 
The values of $\ttx\ttK$ and $\ttm\ttK$ obtained in our numerical tests are depicted in \cref{tab:truncation}.
\begin{table}[ht!]
\centering \footnotesize
\setlength{\tabcolsep}{8pt} 
\renewcommand{\arraystretch}{1.2} 
\caption{Truncation indices $\ttx\ttK$ and $\ttm\ttK$ guaranteeing $\|\ttU-\ttU\ttK\|\le \delta$ and $\|\ttV-\ttV\ttK\| \le \delta$ with $\delta=10^{-5}$. \label{tab:truncation}}
\begin{tabular}{c||l|l|l|l|l|l}
    ref
        &  0 & 1 & 2 & 3 & 4 & 5 \\
    \hline
    \hline
    $\ttx\ttK$ 
        & 88 & 139 & 163 & 179 & 185 & 187 \\
    \hline
    $\ttm\ttK$
        & 88 & 121 & 133 & 137 & 137 & 137 \\
\end{tabular}
\end{table}
On the coarsest mesh, the number of possible excitations and detectors is limited by the number of boundary vertices, but otherwise, the number of truncation indices $\ttx\ttK$ and $\ttm\ttK$ are almost independent of the truth approximation. This can be expected since the eigenvalues converge with increasing refinement of the mesh. 

\subsubsection{Forward evaluation}

As outlined in \Cref{sec:num.offline}, the full operator and its tensor product approximation can now be simply defined by \verb+T=@(c) V'*D(c)*U+ and \verb+TKK=@(c) VK'*D(c)*UK+. 
In \cref{tab:evaluation}, we report about the computation times for a single evaluation of these operators.
\begin{table}[ht!]
\centering \footnotesize
\setlength{\tabcolsep}{8pt} 
\renewcommand{\arraystretch}{1.2} 
\caption{Computation times (sec) for a single evaluation of $\ttT(\ttc)$ and $\ttT\ttK\ttK(\ttc)$. \label{tab:evaluation}}
\begin{tabular}{c||l|l|l|l|l|l}
    ref
        &  0 & 1 & 2 & 3 & 4 & 5 \\
    \hline
    \hline
    $\ttT(\ttc)$ 
        & 0.00 & 0.01 & 0.07 & 1.03 & 13.99 & 214.20 \\
    \hline
    $\ttT\ttK\ttK(\ttc)$
        & 0.00 & 0.01 & 0.02 & 0.11 & 0.52 & 2.20 \\
\end{tabular}
\end{table}
As can be seen, even the problem adapted evaluation of the full operator $\ttT(\ttc)$ becomes practically useless for the solution of the inverse problem \cref{eq:ip}. The tensor product approximation $\ttT\ttK\ttK(\ttc)$, which is the underlying approximation for methods based on optimal sources \cite{Krebs2009} or based on the Kathri-Rhao product \cite{Markel2019}, seems somewhat better suited but, as we will see below, may still be not appropriate for the efficient solution of the inverse problem. 

\subsubsection{Truncated singular value decomposition}

As a theoretical reference for model-reduction, we consider the low-rank approximation of $\ttT(\ttc)$ and $\ttT\ttK\ttK(\ttc)$ by truncated singular value decomposition, which can be computed via eigenvalue decompositions for the symmetric operators $\ttT (\ttTt(\ttM))$ and $\ttT\ttK\ttK(\ttT\ttK\ttKt(\ttM\ttK\ttK))$. 
The latter can be computed numerically by the \texttt{eigs} routine of \textsc{Matlab} in a matrix-free way, i.e., only requiring the application of the operators $\ttT(\ttc)$, $\ttT\ttK\ttK(\ttc)$ and their adjoints $\ttTt(\ttM)$, $\ttT\ttK\ttKt(\ttM)$. 
The sum of \verb+xK+ and \verb+mK+ specifies the maximal number of eigenvalues to be considered by the algorithm.
In \cref{tab:eigs}, we display the computation times for eigenvalue solvers and the number $N$ of relevant eigenvalues required to obtain a $\delta$-approximation. 
\begin{table}[ht!]
\centering \footnotesize
\setlength{\tabcolsep}{8pt} 
\renewcommand{\arraystretch}{1.2} 
\caption{Computation times (sec)  for singular value decompositions of $\ttT(\ttc)$ and $\ttT\ttK\ttK(\ttc)$ and truncation indices $N$ leading to corresponding $\delta$-approximations. \label{tab:eigs}}
\begin{tabular}{c||l|l|l|l|l|l}
    ref
        &  0 & 1 & 2 & 3 & 4 & 5 \\
    \hline
    \hline
    svd($\ttT$) 
        & 6.46 & 28.23 & 284.33 & --- & --- & --- \\
    \hline
    $N(\ttT)$ 
        & 231 & 303 & 473 & --- & --- & --- \\
    \hline
    \hline
    svd($\ttT\ttK\ttK$)
        & 6.45 & 15.05 & 48.40 & 248.42 & 994.66 & --- \\
    \hline
    $N(\ttT\ttK\ttK)$ 
        & 231 & 276 & 296 & 310 & 314 & --- \\
\end{tabular}
\end{table}
The computation times for the decomposition of the full operator $\ttT$ increase roughly by a factor of $8$ per refinement, while those for 
the tensor product approximation only increase by a factor of $4$. Computations taking longer than 1000sec were not conducted. 
Due to the substantially smaller rank, the evaluation $\ttT\ttN(\ttc)$ of the low-rank approximations resulting from one of the 
singular value decompositions above is faster by a factor of more than 100 compared to that of the tensor product approximation $\ttT\ttK\ttK(\ttc)$,
and even on the finest mesh only takes about 0.01sec. 
Let us recall that a discretization at mesh level $\text{ref}\ge4$ is required to guarantee sufficient approximation $\|\T_h - \T\|_{\L(\XX;\HS(\YY,\ZZ'))} \le \delta$ of the truth approximation $\T_h$ used for the solution of the inverse problem.

\subsubsection{Setup of reduced order model}

As described in Section~\ref{sec:2}, we can utilize the hyperbolic cross approximation $\ttT\ttK$ instead of the full tensor product approximation $\ttT\ttK\ttK$ without loosing the $\delta$-approximation property. 
In \cref{tab:hyperbolic}, we summarize the computation times for assembling the hyperbolic cross approximation $\ttT\ttK$
and the subsequent singular value decomposition used in the final  recompression step. 
\begin{table}[ht!]
\centering \footnotesize
\setlength{\tabcolsep}{8pt} 
\renewcommand{\arraystretch}{1.2} 
\caption{Computation times (sec) for construction of the hyperbolic cross approximation $\ttT\ttK(\ttc)$ and its singular value decomposition 
used for constructing the final approximation $\ttT\ttN(\ttc)$ with rank $N(\ttT\ttK)$. \label{tab:hyperbolic}}
\begin{tabular}{c||l|l|l|l|l|l}
    ref
        &  0 & 1 & 2 & 3 & 4 & 5 \\
    \hline
    \hline
    setup of $\ttT\ttK$, $\ttT\ttK\ttT\ttK\ttt$ 
        & 0.01 & 0.013 & 1.38 & 6.31 & 30.48 & 140.32 \\    
    \hline
    svd($\ttT\ttK$) 
        & 0.08 & 0.87 & 2.57 & 3.51 & 3.87 & 4.03 \\ 
    \hline
    \hline
    $\text{rank}(\ttT\ttK)$ 
        & 403 & 933 & 1\,725 & 1\,867 & 1\,905 & 1\,917 \\
    \hline
    $\text{rank}(\ttT\ttN)$ 
        & 166 & 266 & 391 & 396 & 401 & 403 \\
\end{tabular}
\end{table}
Note that the setup cost for the hyperbolic cross approximation increases roughly by a factor of $4$ for each refinement, 
while the subsequent singular value decomposition and the ranks are essentially independent of the mesh level.

Due to the moderate rank $K=\operatorname{rank}(\ttT\ttK)$ of the hyperbolic cross approximation, 
it pays off to compute the matrix approximation of $\ttT\ttK\ttT\ttKt$ and to use it for the 
subsequent eigenvalue decomposition.
As can be seen from \cref{tab:hyperbolic}, the recompression step allows to reduce the rank by another factor of about $5$. 
As predicted by our theoretical investigations, the rank of the final approximation $\ttT\ttN$ is comparable 
to that of the truncated singular value decomposition of the full operator $\ttT$ or its tensor product 
approximation $\ttT\ttK\ttK$; cf. \cref{tab:eigs}.
The use of the hyperbolic cross approximation $\ttT\ttK$ instead of the full operator or its tensor product approximation however allows to speed up the computation of the final low-rank approximation $\ttT\ttN$ substantially.
Again, the rank of the approximation becomes essentially independent of the mesh after some initial refinements, reflecting the mesh-independence of our approach.

\subsection{Solution of inverse problem -- online phase}

We now turn to the online phase of the solution process.
Iterative methods are used for the solution of the inverse problem
with the full operator $\ttT$ and its tensor product approximation $\ttT\ttK\ttK$. 
As mentioned before, we choose a regularization parameter  $\alpha=10^{-8}$, which was determined by the discrepancy principle. 
For the computation of the regularized solution \eqref{eq:cand} with full operator $\ttT(\ttc)$ and the tensor product approximation $\ttT\ttK\ttK(\ttc)$, we use \textsc{Matlab}'s \verb+pcg+ routine with tolerance set to $\texttt{tol}=\alpha \delta^2$.
Since the rank of the final reduced order model  $\ttT\ttN$ is rather small, we can use a direct solution of \eqref{eq:cand} by \textsc{Matalb}'s \texttt{backslash} operator in that case.

In \cref{tab:iterative}, we display 
the online solution times 
and the error $\texttt{err}=\|c_\alpha^\delta-c^\dag\|$ obtained for the final iterate. 
\begin{table}[ht!]
\centering \footnotesize
\setlength{\tabcolsep}{8pt} 
\renewcommand{\arraystretch}{1.2} 
\caption{Computation times (sec) for the solution of the inverse problem via Tikhonov regularization. 
Iterative methods are utilized for the solution of \eqref{eq:cad} in the first two cases working with operators $\ttT$ and $\ttT\ttK\ttK$, while a direct solver is used for the low-rank approximation $\ttT\ttN$. 
\label{tab:iterative}}
\begin{tabular}{c||l|l|l|l|l|l}
    ref
        &  0 & 1 & 2 & 3 & 4 & 5 \\
    \hline
    \hline
    $\ttT$
        & 1.24 & 13.91 & 320.73 & --- & --- & --- \\    
    \hline
    $\ttT\ttK\ttK$ 
        & 1.22 & 10.07 & 65.02 & 382.76 & --- & --- \\    
    \hline
    \hline
    $\ttT\ttN$ 
        & 0.01 & 0.01 & 0.03 & 0.13 & 0.52 & 1.94 \\   
\end{tabular}
\end{table}
Approximately $1\,800$ iterations are required for the iterative solution of \cref{eq:cad} with the full operator $\ttT$ and the tensor-product approximation $\ttT\ttK\ttK$ on all mesh levels, which again illustrates the mesh-independence of the algorithms. 
Note that even for the tensor product approximation, the iterative solution on fine meshes becomes practically infeasible, while for the low-rank approximation $\T_N$ of quasi-optimal rank, the inverse problem solution remains extremely efficient up the finest mesh. 

In \cref{tab:solution} we discuss in more detail the computation times for the individual steps in \cref{eq:cand}, namely the \emph{data compression},
the \emph{solution of the regularized normal equations}, and the \emph{synthesis of the reconstruction}.
\begin{table}[ht!]
\centering \footnotesize
\setlength{\tabcolsep}{8pt} 
\renewcommand{\arraystretch}{1.2} 
\caption{Computation times (sec) for the individual steps of the online phase for inversion with reduced order model $\T_N$ of quasi-optimal rank. \label{tab:solution}}
\begin{tabular}{c||l|l|l|l|l|l}
    ref
        &  0 & 1 & 2 & 3 & 4 & 5 \\
    \hline
    \hline
    data compression 
        & 0.001 & 0.005 & 0.028 & 0.114 & 0.457 & 1.831 \\    
    \hline
    regularized normal equations 
        & 0.002 & 0.001 & 0.002 & 0.003 & 0.003 & 0.003 \\
    \hline
    synthesis 
        & 0.001 & 0.001 & 0.004 & 0.015 & 0.061 & 0.107 \\ 
\end{tabular}
\end{table}
Similar online computation times are also obtained for the low-rank approximation computed by truncated singular value decomposition of the full operator $\T$,
since its rank and approximation properties are very similar to that of the approximation constructed by our approach.

As announced in the introduction and predicted by our complexity estimates,
the data compression step becomes the most compute-intensive task in the 
online solution via the low-rank reduced order model $\ttT\ttN$. 
While the data compression and synthesis step depend on the 
dimension of the truth approximation, the solution of the regularized normal 
equations becomes completely independent of the computational mesh. 
Also observe that the quality of the reconstruction is not degraded by 
the use of a low-rank approximation in the solution process.
Overall, we thus obtained an extremely efficient, stable, and accurate reconstruction for fluorescence tomography. 

\section{Summary} \label{sec:5}

A novel approach towards the systematic construction of approximations for high dimensional linear inverse problems with operator valued data was proposed yielding certified reduced order models of quasi-optimal rank. 
The approach was fully analyzed in a functional analytic setting and the theoretical results were illustrated by an application to fluorescence optical tomography.
The main advantages of our approach, compared to more conventional low-rank approximations, like truncated singular value decomposition, lies in 
a vastly improved setup time and the possibility to partially compress the data already during recording.
In particular, the computational effort of setting up the reduced order model $\T_N$ is comparable to that of \emph{one single} evaluation $\T(c)$ of the forward operator. Due to the underlying tensor-product approximation, access to the full data $\M^\delta$ is not required.

The most compute intensive part in the offline phase consists in the setup of the discrete representations for $\U$ and $\V$ as well as their eigenvalue decomposition. A closer investigation and the use of parallel computation could certainly further improve the computation times for this step. 
Further acceleration of the data compression and synthesis step could probably be achieved by using computer graphics hardware. 
The low-dimensional reduced order models obtained in this paper may also serve as preconditioners for the iterative solution of related nonlinear inverse problems, which  would substantially increase the field of potential applications.

\section*{Acknowledgments}

The work of the second author was supported by the German Research Foundation (DFG) via grants TRR~146 C3 and TRR~154 C04 and via the ``Center for Computational Engineering'' at TU Darmstadt.

\bibliographystyle{siamplain} 
\bibliography{accelfdot}

\end{document}